\numberwithin{equation}{section}
\def\beq{\begin{eqnarray}}
\def\eeq{\end{eqnarray}}
\def\beqs{\begin{eqnarray*}}
\def\eeqs{\end{eqnarray*}}
\def\mz{{\mathbb Z}}
\def\mc{{\mathbb C}}
\def\a{\alpha}
\newfont{\df}{eufm10}
\def\a{\alpha}
\def\mc{{\mathbb C}}
\def\id{\hbox{\rm id}}
\def\sg{\frak{g}}
\title[]
{Heisenberg double of the generalized quantum euclidean group and its representations}
\author[L. Xia]{Limeng Xia}
\address{XIA: Institute of Applied System Analysis, Jiangsu University, XueFu Road 301,
Zhenjiang 212013, Jiangsu,  PR China} \email{xialimeng@ujs.edu.cn}
\date{}
\begin{document}
\maketitle
\def\oh{{\mathcal{H}_q}}
\def\oq{{\mathcal{O}_q}}
\def\ou{{\mathcal{U}_q}}
\def\od{{\mathcal{D}_q}}
\def\oqmn{{\mathcal{O}_q(E_2;m,n)}}
\def\odmn{{\mathcal{D}_q(E_2;m,n)}}
\def\oumn{{\mathcal{U}_q(\frak{e}_2;m,n)}}
\def\mk{{\mathbb{K}}}

\newtheorem{theo}{Theorem}[section]
\newtheorem{theorem}[theo]{Theorem}
\newtheorem{defi}[theo]{Definition}
\newtheorem{lemma}[theo]{Lemma}
\newtheorem{coro}[theo]{Corollary}
\newtheorem{prop}[theo]{Proposition}
\newtheorem{remark}[theo]{Remark}

\begin{abstract} The generalized quantum Euclidean group $\oq(\frak{b}_{m,n})$ is a natural generalization of the  quantum Euclidean group $\oq(\frak{b}_{1,1})$. The  Heisenberg double  $\od(\frak{b}_{m,n})$ of  $\oq(\frak{b}_{m,n})$ is the smash product of $\oq(\frak{b}_{m,n})$ with its Hopf dual $\ou(\frak{b}_{m,n})$.  In this paper, we study the weight modules, the prime spectrum  and the automorphism group of the Heisenberg double $\od(\frak{b}_{m,n})$.
\end{abstract}

{\bf Keywords:}  Heisenberg double, generalized quantum euclidean group,  weight module, prime spectrum

{\it  Mathematics Subject Classification (2000)}: 17B37

\smallskip\bigskip

\section{Introduction}

Throughout this paper, $\mk$ is an algebraically closed field of  characteristic zero, for example $\mc$. Let $\mk^*=\mk\setminus\{0\}$ and $q\in\mk^*$ is not a root of unity.

Assume that $\sg$ is an indecomposable Lie algebra of dimension $3$, then it is one of the following:
\begin{itemize}
\item{The three dimensional simple Lie algebra $\frak{sl}_2$.}
\item{The three dimensional Heisenberg Lie algebra.}
\item{$\frak{b}_\lambda={\rm span}\{h, x, y\}$ such that $[h,x]=x, [h,y]=\lambda y, [x,y]=0$ for some $\lambda\in\mk^*$.}
\end{itemize}
They are very important in the representation theory of Lie algebras. The simple modules over $\frak{sl}_2$ and the simple modules over Heisenberg Lie algebra have been completely classified in \cite{B}.
If $\lambda$ is not rational,  the classification of simple $\frak{b}_\lambda$-modules is not yet done.  When $\lambda$ is rational, $\frak{b}_\lambda$ is isomorphic to some $\frak{b}_{m,n}$, which is the three dimensional indecomposable Lie algebra with basis $\{h,x,y\}$ and brackets
\beqs [h,x]=mx, [h,y]=ny, [x,y]=0,\eeqs
for some nonzero integers $m,n$. The simple modules over $\frak{b}_{m,n}$ have been classified, and the classification has a little difference according to ${\rm sgn}(mn)$.

In this paper, we consider the Heisenberg double of quantum groups obtained from the quantized universal enveloping algebra of $\frak{b}_{m,n}$.

\begin{defi}The Hopf algebra $\oq(\frak{b}_{m,n})$ is generated by the elements $a,a^{-1},b,c$ subject to the following relations
\beqs ab=q^nba,\;ac=q^{m}ca,\;bc=cb,\;aa^{-1}=a^{-1}a=1.\eeqs
Its comultiplication $\Delta$, counit $\varepsilon$, antipode $S$ are defined by
\beqs &&\Delta(a)=a\otimes a,\; \varepsilon(a)=1,\; S(a)=a^{-1},\\
&&\Delta(b)=b\otimes a^{-n}+a^n\otimes b,\; \varepsilon(b)=0,\; S(b)=-q^{-n^2}b,\\
&&\Delta(c)=c\otimes a^m+a^{-m}\otimes c,\; \varepsilon(c)=0,\; S(c)=-q^{m^2}c.\eeqs
\end{defi}

\begin{defi}The Hopf algebra $\ou(\frak{b}_{m,n})$ is generated by the elements $K,K^{-1},E,F$ subject to the following relations
\beqs KE=q^{2m}EK,\;KF=q^{-2n}FK,\;EF=FE,\;KK^{-1}=K^{-1}K=1.\eeqs
Its comultiplication $\Delta$, counit $\varepsilon$, antipode $S$ are defined by
\beqs &&\Delta(K)=K\otimes K,\; \varepsilon(K)=1,\; S(K)=K^{-1},\\
&&\Delta(E)=E\otimes K^{m}+1\otimes E,\; \varepsilon(E)=0,\; S(E)=-EK^{-m},\\
&&\Delta(F)=F\otimes 1+K^{-n}\otimes F,\; \varepsilon(F)=0,\; S(F)=-K^{n}F.\eeqs
\end{defi}

Note that $\oq(\frak{b}_{m,n})$ is a quantized analogue of $U(\frak{b}_{m,n})$, at the same time  $\ou(\frak{b}_{m,n})$ is a quantized analogue of $U(\frak{b}_{m,-n})$. In particular, $\oq(\frak{b}_{1,1})$ is the quantum Euclidean group and $\frak{b}_{1,-1}$ is the Euclidean Lie algebra. Thus we call $\oq(\frak{b}_{m,n})$ the generalized quantum Euclidean group and call $\frak{b}_{m,-n}$ the generalized Euclidean Lie algebra.

\begin{prop}$\ou(\frak{b}_{m,-n})$ is isomorphic to a Hopf subalgebra of $\oq(\frak{b}_{m,n})$.
\end{prop}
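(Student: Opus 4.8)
The plan is to construct an explicit injective homomorphism of Hopf algebras $\phi\colon\ou(\frak{b}_{m,-n})\to\oq(\frak{b}_{m,n})$ and identify $\ou(\frak{b}_{m,-n})$ with $\phi(\ou(\frak{b}_{m,-n}))$. The correct definition of $\phi$ is dictated by comparing coproducts: since $\Delta(c)=c\otimes a^m+a^{-m}\otimes c$, the normalized element $a^mc$ satisfies $\Delta(a^mc)=a^mc\otimes a^{2m}+1\otimes a^mc$, which has exactly the shape of $\Delta(E)=E\otimes K^m+1\otimes E$ provided $K$ is sent to $a^2$ (not $a$); likewise $\Delta(ba^n)=ba^n\otimes1+a^{2n}\otimes ba^n$ matches $\Delta(F)=F\otimes1+K^n\otimes F$, where in $\ou(\frak{b}_{m,-n})$ the parameter $n$ in the definition of $\ou$ is replaced throughout by $-n$, so that there $KF=q^{2n}FK$, $\Delta(F)=F\otimes1+K^n\otimes F$ and $S(F)=-K^{-n}F$. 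Accordingly I set
\[
\phi(K)=a^2,\qquad \phi(K^{-1})=a^{-2},\qquad \phi(E)=a^mc,\qquad \phi(F)=ba^n.
\]

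First I would check that $\phi$ is a well-defined algebra map, i.e.\ that the images obey the defining relations of $\ou(\frak{b}_{m,-n})$: $a^2a^{-2}=1$ is trivial, while $a^2(a^mc)=q^{2m}(a^mc)a^2$, $a^2(ba^n)=q^{2n}(ba^n)a^2$ and $(a^mc)(ba^n)=(ba^n)(a^mc)$ are immediate consequences of $ab=q^nba$, $ac=q^mca$ and $bc=cb$ (the commutation of the images of $E$ and $F$ uses $bc=cb$). Then I would verify that $\phi$ is a morphism of Hopf algebras: compatibility with $\Delta$ is the two coproduct computations displayed above together with $\Delta(a^2)=a^2\otimes a^2$; compatibility with $\varepsilon$ is clear since $\varepsilon(a)=1$ and $\varepsilon(b)=\varepsilon(c)=0$; and compatibility with $S$ reduces, using that $S$ is an algebra anti-automorphism with $S(a)=a^{-1}$, $S(b)=-q^{-n^2}b$, $S(c)=-q^{m^2}c$, to the identities $S(a^mc)=\phi(S(E))=-q^{2m^2}a^{-m}c$ and $S(ba^n)=\phi(S(F))=-q^{-2n^2}ba^{-n}$.

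It remains to show $\phi$ is injective, and for this I would use PBW bases. The algebra $\oq(\frak{b}_{m,n})$ is an iterated Ore extension of $\mk[a^{\pm1}]$ and so has $\mk$-basis $\{a^lb^kc^j:l\in\mz,\ k,j\ge0\}$, and similarly $\ou(\frak{b}_{m,-n})$ has basis $\{K^iE^jF^k:i\in\mz,\ j,k\ge0\}$. A straightforward computation with the $q$-commutation relations shows that $\phi(K^iE^jF^k)$ equals a nonzero scalar times $a^{\,2i+mj+nk}b^kc^j$, and the map $(i,j,k)\mapsto(2i+mj+nk,\,k,\,j)$ on $\mz\times\mz_{\ge0}^2$ is injective (the last two entries recover $j$ and $k$, and then the first recovers $2i$, hence $i$). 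Therefore $\phi$ carries a basis to a set of nonzero scalar multiples of pairwise distinct PBW monomials, hence to a linearly independent set, so $\phi$ is injective; its image is the Hopf subalgebra of $\oq(\frak{b}_{m,n})$ generated by $a^{\pm2}$, $a^mc$ and $ba^n$. I do not expect a genuine obstacle here: the only non-obvious choice is the normalization $K\mapsto a^2$ forced by matching the coproducts, and the one point demanding care is tracking the powers of $q$ in the PBW computation used for injectivity.
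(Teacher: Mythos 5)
Your proposal is correct and uses exactly the embedding $K\mapsto a^2$, $E\mapsto a^mc$, $F\mapsto ba^n$ that the paper gives; the paper simply asserts that the verification is routine, while you carry it out, including the injectivity argument via PBW bases, which the paper omits.
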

\begin{proof}It is easy to verify that the following map defines an embedding of Hopf algebras:
\beqs K\mapsto a^2,\; E\mapsto a^mc,\; F\mapsto ba^{n}.\eeqs
\end{proof}

There is a unique non-degenerate dual pair $\langle\cdot,\cdot\rangle:\ou(\frak{b}_{m,n})\times\oq(\frak{b}_{m,n})\rightarrow\mk$ determined by non-vanishing pairs:
\beqs \langle K, a\rangle={q}^{-1}, \langle K, a^{-1}\rangle=q, \langle E,c\rangle=1,\langle F,b\rangle=1.\eeqs

{\it Note that there are two different non-degenerate dual pairs $\langle\cdot,\cdot\rangle:\oq(\frak{b}_{m,-n})\times\oq(\frak{b}_{m,n})\rightarrow\mk$.
}

This dual pair makes $\oq(\frak{b}_{m,n})$ to be a left $\ou(\frak{b}_{m,n})$-module algebra with the action defined by
\beqs u\cdot x=\sum_{x}x_{(1)}\langle u,x_{(2)}\rangle,\;\forall u\in\ou(\frak{b}_{m,n}), x\in\oq(\frak{b}_{m,n}).\eeqs
Explicitly, the action takes the following form
\beqs &&K\cdot a=q^{-1}a,\;K\cdot b=q^nb,\;K\cdot c=q^{-m}c,\\
&&E\cdot a=0,\;E\cdot b=0,\;E\cdot c=a^{-m},\\
&&F\cdot a=0,\;F\cdot b=a^n,\;F\cdot c=0.\eeqs

We define the {\it Heisenberg double of the generalized quantum Euclidean group}, which is the smash product algebra
\beqs \od(\frak{b}_{m,n})=\oq(\frak{b}_{m,n})\rtimes\ou(\frak{b}_{m,n}).\eeqs

In this paper we are concerned with the Heisenberg double $\od(\frak{b}_{m,n})$ and its representation theory.

\begin{defi}The Heisenberg double $\od(\frak{b}_{m,n})$ of the generalized quantum Euclidean group is the unital algebra generated by the elements $a, a^{-1}, b, c, K, E, F$ subject to the following relations
 \beqs&&ab=q^nba,\;ac=q^{m}ca,\;bc=cb,\;aa^{-1}=a^{-1}a=1,\\
 &&KE=q^{2m}EK,\;KF=q^{-2n}FK,\;EF=FE,\;KK^{-1}=K^{-1}K=1,\\
 &&Ka=q^{-1}aK,\;Kb=q^nbK,\;Kc=q^{-m}cK,\\
 &&Ea=aE,\;Eb=bE,\;Ec=cE+a^{-m}K^m,\\
 &&Fa=q^naF,\;Fb=q^{-n^2}bF+a^n,\;Fc=q^{mn}cF.\eeqs
\end{defi}
The centrally extended Heisenberg double of $SL_2$ and the Heisenberg double $\od(\frak{b}_{1,1})$ of the quantum Euclidean group were studied by Tao (see \cite{Tao, Tao2}).  For the general construction of the Heisenberg double of a Hopf algebra, one is referred to \cite{Lu}.

\section{The weight modules}

Because the map $\xi_K: x\rightarrow KxK^{-1}$ (resp. $\xi_a: x\rightarrow axa^{-1}$) is diagonal, it is possible to study the $K$-weight (resp. $a$-weight) modules over $\od(\frak{b}_{m,n})$.

\begin{defi}A $\od(\frak{b}_{m,n})$-module $V$ is called a $K$-weight module provided that $V=\oplus_{\lambda\in\mk^*}V_\lambda$ where $V_\lambda=\{v\in V:Kv=\lambda v\}$. Denote by ${\rm supp}_K(V)=\{\lambda\in\mk^*|V_\lambda\not=0\}$ the set of all $K$-weights of $V$.
\end{defi}

\begin{defi}A $\od(\frak{b}_{m,n})$-module $V$ is called an $a$-weight module provided that $V=\oplus_{\mu\in\mk^*}V^\mu$ where $V^\mu=\{v\in V:av=\mu v\}$. Denote by ${\rm supp}_a(V)=\{\mu\in\mk^*|V^\mu\not=0\}$ the set of all $a$-weights of $V$.
\end{defi}

Let $\od(\frak{b}_{m,n})^+$ be the polynomial algebra $\mk[E,b]$, let $\od(\frak{b}_{m,n})^0$ be the quantum torus $\mk[K^{\pm1},a^{\pm1}]$ and let $\od(\frak{b}_{m,n})^-$ be the quantum polynomial algebra $\mk[F,c]$. Then as algebra, $\od(\frak{b}_{m,n})$ has a triangular decomposition
\beqs \od(\frak{b}_{m,n})=\od(\frak{b}_{m,n})^-\od(\frak{b}_{m,n})^0\od(\frak{b}_{m,n})^+.\eeqs

Let $b'=a^nbK^{-n}, c'=a^{-m}cK^{-m}, E'=a^{2m}E, F'=q^{-n^2}a^{-2n}FK^n$. Then $\{a^{\pm1}, K^{\pm1}, b',c',E',F'\}$ is also a generating set of $\od(\frak{b}_{m,n})$. In particular, we have
\beqs &&Kb'=b'K,\; Kc'=c'K,\; KE'=E'K,\; KF'=F'K,\\
&&ab'=b'a,\; ac'=c'a,\; aE'=E'a,\; aF'=F'a,\\
&&b'c'=q^{2mn}c'b', \; E'b'=q^{2mn}b'E',\; F'b'=q^{-2n^2}b'F'+1, \\
&&E'c'=q^{2m^2}c'E'+1,\; F'c'=q^{-2mn}c'F',\;E'F'=q^{-2mn}F'E'. \eeqs

Let $S$ be the subalgebra $\mk[b',c',E',F']$, then we have the following isomorphism of algebras
\beqs \od(\frak{b}_{m,n})&\cong&\od(\frak{b}_{m,n})^0\otimes S.\eeqs

\begin{lemma}\label{S-mod}The $S$ has no finite dimensional modules.
\end{lemma}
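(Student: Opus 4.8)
The plan is to exhibit, inside $S=\mk[b',c',E',F']$, a copy of (a localization of) the Weyl algebra, which is well known to have no finite-dimensional modules, and then argue that any finite-dimensional $S$-module would restrict to one. Looking at the displayed relations for the primed generators, the pairs $(F',c')$ and $(E',b')$ $q$-commute, while $(F',b')$ and $(E',c')$ each satisfy a relation of the form $XY=q^{\bullet}YX+1$. The relation $F'b'=q^{-2n^2}b'F'+1$ is the key one: it says that on any $S$-module, the operators $b'$ and $F'$ generate a quantum Weyl algebra $\mk_{q^{-2n^2}}[b',F']$, and since $q$ is not a root of unity, $q^{-2n^2}\neq 1$, so this is the quantum (Jordanian/multiplicative) Weyl algebra rather than the classical one.

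First I would isolate the subalgebra $T=\mk\langle b',F'\rangle\subseteq S$ and record that $T$ is the quantum Weyl algebra $A_1^{q^{-2n^2}}$. Then the core step: show $A_1^{\tau}$ with $\tau$ not a root of unity has no nonzero finite-dimensional modules. The standard argument is to take the would-be operators $B=b'$, $\Phi=F'$ on a finite-dimensional $V\neq 0$ with $\Phi B=\tau B\Phi+1$. Rewrite as $\Phi B-\tau B\Phi=1$ and take traces: $\operatorname{tr}(\Phi B)-\tau\operatorname{tr}(B\Phi)=\dim V$. Since $\operatorname{tr}(\Phi B)=\operatorname{tr}(B\Phi)$, this gives $(1-\tau)\operatorname{tr}(B\Phi)=\dim V$, so $\operatorname{tr}(B\Phi)=\dim V/(1-\tau)$. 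This alone isn't a contradiction, so I would instead use the eigenvalue argument: from $\Phi B=\tau B\Phi+1$ one gets $\Phi(B\Phi)=(\tau B\Phi+1)\Phi=\tau B\Phi^2+\Phi$, and more usefully, if $v$ is an eigenvector of $B\Phi$ with eigenvalue $\mu$, then $B\Phi(\Phi v)$ and $B\Phi(Bv)$ are controlled: indeed $B\Phi B=B(\tau B\Phi+1)=\tau B^2\Phi+B$, so $B\Phi\cdot(Bv)=\tau B\cdot(B\Phi v)+Bv=(\tau\mu+1)(Bv)$ when $B\Phi v=\mu v$. Hence $B$ shifts the $B\Phi$-eigenvalue $\mu$ to $\tau\mu+1$; since $\tau$ is not a root of unity, the affine map $\mu\mapsto\tau\mu+1$ has infinite order on its non-fixed points, so either $V$ contains an infinite chain of distinct eigenvalues (impossible in finite dimensions) or every eigenvalue is the unique fixed point $\mu_0=1/(1-\tau)$ and $B$, $\Phi$ are nilpotent modulo that generalized eigenspace; then a short argument with the generalized eigenspace of $B\Phi$ forces $B$ or $\Phi$ to be non-injective in a way incompatible with $\Phi B-\tau B\Phi=1$ on that space — concretely, on $\ker B$ we'd need $1$ to act as $0$. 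Working out that last reduction cleanly is where the only real care is needed.

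I expect the main obstacle to be packaging the quantum-Weyl-algebra fact in a self-contained way: one must be slightly careful because the naive trace identity, which kills the classical Weyl algebra instantly, does not immediately kill the quantum one, so the eigenvalue-shift argument (or equivalently passing to the localization $\mk\langle b'^{\pm1},F'\rangle$ where $F'b'^{-1}-q^{2n^2}b'^{-1}F'=-q^{2n^2}b'^{-1}$ forces an infinite orbit) is needed. Once Lemma on $A_1^\tau$ is in hand, the conclusion is immediate: a finite-dimensional $S$-module $V\neq0$ restricts to a finite-dimensional $T$-module, and the $T$-action factors through $A_1^{q^{-2n^2}}$, contradiction. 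Therefore $S$ has no nonzero finite-dimensional modules.
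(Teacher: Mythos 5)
Your plan---pass to the quantum Weyl subalgebra $T=\mk\langle b',F'\rangle/(F'b'-q^{-2n^2}b'F'-1)$ and show it has no finite-dimensional modules---is the same plan as the paper's (which invokes the subalgebras $S_1=\mk[E',c']$ and $S_2=\mk[F',b']$), but the intermediate fact both of you rely on is \emph{false}. The quantum Weyl algebra $\mk\langle B,\Phi\rangle/(\Phi B-\tau B\Phi-1)$ with $\tau\neq 1$ \emph{does} have a nonzero finite-dimensional module: the one-dimensional representation $B\mapsto 1$, $\Phi\mapsto 1/(1-\tau)$ satisfies $\Phi B-\tau B\Phi=(1-\tau)\cdot\tfrac{1}{1-\tau}=1$. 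This is exactly the fixed-point case $\mu_0=1/(1-\tau)$ that your argument was supposed to eliminate at the end, and there it breaks down: in this module $B$ and $\Phi$ are nonzero scalars, so neither is nilpotent, $\ker B=0$, and ``on $\ker B$ we'd need $1$ to act as $0$'' has no content. Your earlier dichotomy is also not airtight (a $B$-chain from an eigenvalue $\mu\neq\mu_0$ need not be infinite---it can terminate with $Bw=0$, which just forces $B\Phi w=-\tau^{-1}w$), but the fatal problem is the counterexample: no argument that only sees one quantum Weyl pair, or equivalently only $S_1$ or only $S_2$, can possibly prove the lemma. (The paper's citation ``a generalized Weyl algebra has no finite dimensional modules'' is incorrect as stated.)

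A correct proof has to use the coupling between the two Weyl pairs via the remaining $q$-commutation relations in $S$. For instance, already in dimension one: from $b'c'=q^{2mn}c'b'$ with $q^{2mn}\neq 1$ one gets $b'c'=0$, yet $E'c'(1-q^{2m^2})=1$ and $F'b'(1-q^{-2n^2})=1$ force $b'\neq 0$ and $c'\neq 0$, a contradiction. In general one can argue via the elements $\phi_1=E'c'-c'E'$ and $\phi_2=F'b'-b'F'$: on a nonzero finite-dimensional module, either some $\ker\phi_i\neq 0$ (an $S$-submodule on which, say, $c'$ becomes invertible, whence $c'b'c'^{-1}=q^{-2mn}b'$ forces $b'$ nilpotent), or both $\phi_i$ are invertible (whence $\phi_1 c'\phi_1^{-1}=q^{2m^2}c'$ forces $c'$ nilpotent); in either case a filtration by $\ker(x^i)$ for the nilpotent generator $x\in\{b',c'\}$, together with the identity $x^i y=\sigma^i y x^i + c_i x^{i-1}$ ($c_i\neq 0$) for its Weyl partner $y$, shows $y$ injects $\ker x^i/\ker x^{i-1}$ into $\ker x^{i+1}/\ker x^i$, which is impossible in finite dimension. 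Your instinct to use the quantum Weyl relation is right; the missing ingredient is that you must first \emph{force} nilpotency of one of the generators using the other relations, and only then does the eigenvalue/filtration argument close.
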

\begin{proof}Note that $S$ has two subalgebras $S_1=\mk[E',c']$ and $S_2=\mk[F',b']$, both of them are generalized Weyl algebras. It is known that a generalized Weyl algebra has no finite dimensional modules.
\end{proof}

Our first main result is  the following theorem.
\begin{theo}\label{weight} (i) Let $V$ be a simple $K$-weight $\od(\frak{b}_{m,n})$-module. Then ${\rm supp}_K(V)=\{q^i\lambda|i\in\mz\}$ for some $\lambda$ and any $K$-weight space of $V$ is a simple $S$-module. In particular, any $K$-weight space of $V$ is infinite dimensional.

(ii)  Let $W$ be a simple $a$-weight $\od(\frak{b}_{m,n})$-module. Then ${\rm supp}_a(W)=\{q^i\mu|i\in\mz\}$ for some $\mu$ and any $a$-weight space of $W$ is a simple $S$-module. In particular, any $a$-weight space of $W$ is infinite dimensional.
\end{theo}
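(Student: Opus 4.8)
The plan is to exploit the algebra isomorphism $\od(\frak{b}_{m,n})\cong\od(\frak{b}_{m,n})^0\otimes S$, under which the commuting pair $K,a$ both lie in the torus factor $\od(\frak{b}_{m,n})^0=\mk[K^{\pm1},a^{\pm1}]$ and commute with all of $S=\mk[b',c',E',F']$. Since both parts of the theorem are symmetric (replace $K$ by $a$), I will write the argument for (i) and remark that (ii) is identical. Fix a simple $K$-weight module $V$ and pick a nonzero weight space $V_\lambda$. The first step is to observe that because $E'=a^{2m}E$ and $b'$ commute with $K$ (and similarly $c',F'$ commute with $K$), each of $b',c',E',F'$ preserves every $K$-weight space; hence each $V_\lambda$ is an $S$-submodule. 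The remaining generators $a^{\pm1}$ also commute with $K$, so they too preserve $V_\lambda$; but $K^{\pm1}$ acts as the scalar $\lambda^{\pm1}$ on $V_\lambda$. To move between weight spaces I would use the relations $KE=q^{2m}EK$ and $Kc=q^{-m}cK$ together with the original relation $Ka=q^{-1}aK$: the element $a$ shifts $\lambda\mapsto q^{-1}\lambda$, so $a^i\cdot V_\lambda\subseteq V_{q^{-i}\lambda}$, and since $a$ is invertible these maps are bijections between $V_\lambda$ and $V_{q^{-i}\lambda}$.

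The core of the argument is then: since $a^{\pm1}$ intertwine $V_\lambda$ with $V_{q^{\mp1}\lambda}$ as $S$-modules (they commute with $S$), the subspace $\bigoplus_{i\in\mz} V_{q^i\lambda}$ is a $\od(\frak{b}_{m,n})$-submodule of $V$ — it is stable under $S$, under $a^{\pm1}$, and under $K^{\pm1}$, hence under the whole algebra via the generating set. By simplicity of $V$ it equals $V$, which gives ${\rm supp}_K(V)=\{q^i\lambda\mid i\in\mz\}$ (with no repeats, since $q$ is not a root of unity, though that is not needed for the statement). Next, to see $V_\lambda$ is simple as an $S$-module: if $0\neq U\subseteq V_\lambda$ were a proper $S$-submodule, then $\bigoplus_i a^i U$ would be a proper nonzero $\od(\frak{b}_{m,n})$-submodule of $V$ (proper because it meets $V_\lambda$ in exactly $U$, using that $a^i$ is a bijection $V_\lambda\to V_{q^{-i}\lambda}$ commuting with $S$), contradicting simplicity. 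Finally, infinite-dimensionality of $V_\lambda$ is immediate from Lemma~\ref{S-mod}: a simple — in particular nonzero — $S$-module cannot be finite dimensional.

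The one point requiring a little care, and the main (mild) obstacle, is verifying that $a^{\pm1}$ really do commute with all four generators $b',c',E',F'$ of $S$, i.e. that the decomposition $\od(\frak{b}_{m,n})\cong\od(\frak{b}_{m,n})^0\otimes S$ is genuinely a tensor product of algebras as claimed in the excerpt; this is exactly the list of relations $ab'=b'a$, $aE'=E'a$, etc., displayed just before the definition of $S$, so it is already available. One should also check that $V_\lambda$ is nonzero for the chosen $\lambda$ and that it generates $V$ — but any simple module is generated by any nonzero vector, so picking $v\in V_\lambda\setminus\{0\}$ and noting $\od(\frak{b}_{m,n})v=V$ together with the triangular/tensor decomposition pins down the support. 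For part (ii) the identical argument runs with the roles of $K$ and $a$ exchanged, using $Ka=q^{-1}aK$ in the form that $K^{\pm1}$ shifts the $a$-eigenvalue by $q^{\pm1}$ and that $K^{\pm1},b',c',E',F'$ all commute among themselves appropriately; the element $K$ plays the shifting role that $a$ played above.
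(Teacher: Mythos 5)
Your argument follows essentially the same route as the paper: use the generating set $\{a^{\pm1},K^{\pm1},b',c',E',F'\}$, observe that $S$ commutes with $K$ so that each weight space $V_\lambda$ is an $S$-module, use $Ka=q^{-1}aK$ and invertibility of $a$ to see that $a^{\pm1}$ give bijections $V_\lambda\to V_{q^{\mp1}\lambda}$ commuting with $S$, conclude from simplicity of $V$ that ${\rm supp}_K(V)$ is a single $q$-orbit and that each $V_\lambda$ is $S$-simple via the $\bigoplus_i a^i U$ submodule trick, and finish with Lemma~\ref{S-mod}. This is exactly the paper's proof.

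One sentence in your writeup contradicts itself and should be deleted: you assert that ``the remaining generators $a^{\pm1}$ also commute with $K$, so they too preserve $V_\lambda$,'' but $Ka=q^{-1}aK$ means $a$ does \emph{not} commute with $K$, and indeed your very next sentence correctly says that $a$ shifts $\lambda\mapsto q^{-1}\lambda$. The slip is harmless to the rest of the argument (you never actually use the false claim), but as written the two sentences cannot both be true. Everything else, including the reduction of (ii) to the same argument with the roles of $K$ and $a$ exchanged, is correct.
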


%
%
%
%
%
%
%
%
%
%


\begin{proof} We only prove part (i), the proof for part (ii) is very similar.

Let $V$ be a simple $K$-weight module with a weight $\lambda\in{\rm supp}_K(V)$. Because $a, a^{-1}$ are invertible and $KaK^{-1}=q^{-1}a$, we have
\beqs aV_\mu\subseteq V_{q^{-1}\lambda},\;a^{-1}V_\mu\subseteq V_{q\lambda}.\eeqs
By induction, it holds ${\rm supp}_K(V)\supseteq\{q^i\lambda|i\in\mz\}$. Because $[K,S]=0$, it implies
\beqs S\cdot V_{q^i\lambda}\subseteq V_{q^i\lambda}.\eeqs
So ${\rm supp}_K(V)=\{q^i\lambda|i\in\mz\}$ and any weight space is an $S$-module. If $V_{q^{i_0}\lambda}$ contains a proper submodule $V'$ over $S$, then
\beqs \sum_{i\in\mz}a^{i}V'\eeqs
is a proper $K$-weight submodule over $\od(\frak{b}_{m,n})$. Thus any weight space is a simple $S$-module. By Lemma \ref{S-mod}, any $K$-weight space of $V$ is infinite dimensional.
\end{proof}

\section{Some simple $S$-modules}

In this section, we construct some classes of simple $S$-modules. In particular, all of these modules have GKdim $2$. 

\begin{prop}
Let $J_1(\sigma,\tau)$ be the left $S$-ideal generated by $b'-\sigma, c'-\tau$ for all $\sigma,\tau\in\mk$ such that $\sigma\tau=0$. Then $S/J_1(\sigma,\tau)$ is a simple $S$-module of GKdim $2$.
\end{prop}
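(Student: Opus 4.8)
Recall the relations in $S=\mk[b',c',E',F']$: we have $b'c'=q^{2mn}c'b'$, $E'b'=q^{2mn}b'E'$, $F'c'=q^{-2mn}c'F'$, $E'F'=q^{-2mn}F'E'$, together with the two "Weyl-type" relations $F'b'=q^{-2n^2}b'F'+1$ and $E'c'=q^{2m^2}c'E'+1$. Fix $\sigma,\tau\in\mk$ with $\sigma\tau=0$, and let $\bar S=S/J_1(\sigma,\tau)$ where $J_1$ is the left ideal generated by $b'-\sigma$ and $c'-\tau$. First I would check that $\bar S$ is a nonzero cyclic module with a spanning set: since $b'$ and $c'$ act as the scalars $\sigma,\tau$ on the cyclic generator $\bar 1$, and $E',F'$ can be moved to the left past $b',c'$ at the cost of scalars (using the $q$-commutation and the two Weyl relations), the $q$-PBW basis $\{c'^{i}b'^{j}E'^{k}F'^{l}\}$ of $S$ shows that $\bar S$ is spanned by $\{E'^{k}F'^{l}\bar 1: k,l\ge 0\}$. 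I would then argue this spanning set is in fact a basis — e.g. by exhibiting an explicit realization of $\bar S$ as an $S$-module on $\mk[E',F']$ (polynomial ring with the commuting-up-to-$q^{-2mn}$ product), where $b',c'$ act by the scalars-plus-derivation formulas forced by the relations, and the two Weyl relations are precisely what makes $\sigma\tau=0$ the compatibility condition. This gives $\operatorname{GKdim}\bar S=2$.

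**Simplicity.** The heart is showing $\bar S$ is simple. Let $0\ne N\subseteq\bar S$ be a submodule and pick $0\ne v\in N$; write $v=\sum c_{kl}E'^{k}F'^{l}\bar 1$. The strategy is to use $b'$ and $c'$ as "lowering" operators that strip off powers of $F'$ and $E'$ respectively. Concretely, from $F'b'=q^{-2n^2}b'F'+1$ one gets, by induction, a relation of the shape $b'F'^{l}=q^{\bullet}F'^{l}b'+[l]_{q}F'^{l-1}$ for a suitable $q$-integer coefficient $[l]_q$ (nonzero since $q$ is not a root of unity), so acting by $b'$ on $v$ and subtracting $\sigma$ times $v$ lowers the $F'$-degree; iterating, $N$ contains a nonzero element with no $F'$. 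Likewise $E'c'=q^{2m^2}c'E'+1$ gives $c'E'^{k}=q^{\bullet}E'^{k}c'+[k]_q E'^{k-1}$, so applying $c'$ strips off powers of $E'$; combining, $N$ contains a nonzero scalar multiple of $\bar 1$, hence $N=\bar S$. One must handle the two cases $\sigma=0$ and $\tau=0$ symmetrically — when $\tau=0$, acting by $c'$ on $v$ directly strips $E'$-powers (no subtraction needed), and similarly for $\sigma=0$ with $b'$ and $F'$; the case where both vanish is then immediate.

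**The main obstacle.** The delicate point is the bookkeeping of the $q$-power coefficients and $q$-integer factors in the iterated relations $b'F'^{l}=q^{\bullet}F'^{l}b'+(\text{nonzero})F'^{l-1}$ and $c'E'^{k}=q^{\bullet}E'^{k}c'+(\text{nonzero})E'^{k-1}$, and verifying that the leading-degree term genuinely survives after one subtracts off the scalar action of $b'$ (resp. $c'$) — i.e. that the operator $b'-\sigma$ acts on $\bar S$ as a nonzero lowering operator in $F'$-degree whose kernel is exactly $\mk[E']\bar 1$, and dually for $c'-\tau$. This is where $q$ not being a root of unity is essential (so all the $q$-integers are nonzero) and where one has to be careful that the two operators do not interfere: one should first reduce in $F'$-degree using only $b'-\sigma$, land in $\mk[E']\bar1$, then reduce in $E'$-degree using only $c'-\tau$. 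Once the lowering procedure is set up cleanly the simplicity and the GKdim computation both drop out, and the restriction $\sigma\tau=0$ will be seen to be exactly the condition under which the prescribed action of $b',c',E',F'$ on $\mk[E',F']$ is consistent with the defining relations of $S$.
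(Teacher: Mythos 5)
Your opening moves coincide with the paper's: the cyclic module is identified with $\mk[E',F']\bar 1$ (hence GKdim $2$), and when $\sigma=0$ the operator $b'$ genuinely lowers $F'$-degree because $b'F'^{l}\bar1=-c_lF'^{l-1}\bar1$ with $c_l\neq0$, so any nonzero submodule $N$ contains a nonzero $\varphi(E')\bar1$. The divergence, and the gap, is in the last step. The paper at this point invokes the simplicity of $\mk[E']\bar1$ as an $S_1=\mk[E',c']$-module; you instead propose to strip $E'$-powers with $c'-\tau$. This fails whenever $\tau\neq0$. Writing $Q=q^{2m^2}$, one has $c'E'^{k}\bar1=Q^{-k}\tau E'^{k}\bar1-\beta_kE'^{k-1}\bar1$ with $\beta_k\neq0$, so $(c'-\tau)E'^{k}\bar1$ has leading coefficient $(Q^{-k}-1)\tau$, which is nonzero for every $k\geq1$; $c'-\tau$ is not a lowering operator. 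Replacing $\tau$ by $Q^{-K}\tau$ kills the top term but then the operator has honest eigenvectors at every degree $K$: the recursion $(Q^{-(k-1)}-Q^{-K})\tau a_{k-1}=\beta_ka_k$ determines a vector $w_K=\sum_{k=0}^{K}a_kE'^{k}\bar1$ with all $a_k\neq0$ satisfying $c'w_K=Q^{-K}\tau w_K$, and no amount of "subtract a scalar, lower the degree" reaches $\bar1$ from $w_K$. So the claim "combining, $N$ contains a nonzero scalar multiple of $\bar1$" does not follow from your lowering argument when the surviving eigenvalue $\tau$ is nonzero. You gesture at a case split ("when $\tau=0$, acting by $c'$ on $v$ directly strips $E'$-powers"), but the problematic case $\sigma=0,\tau\neq0$ — which is permitted by $\sigma\tau=0$ — is precisely the one your plan cannot finish.

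The obstruction is not merely cosmetic. Set $\sigma=0$, $\tau\neq0$, and $w=\bar1+\tau(Q-1)E'\bar1$. Then $b'w=0$ (since $b'\bar1=0$ and $b'E'=q^{-2mn}E'b'$) and $c'w=Q^{-1}\tau\,w$, and one checks directly that $Sw=\operatorname{span}\{E'^{i}F'^{j}w\}$ is stable under $b',c',E',F'$ but does not contain $\bar1$: its $F'$-degree-zero slice consists of $\mk[t]$-multiples of the degree-one polynomial $1+\tau(Q-1)t$. So for these parameters $S/J_1(0,\tau)$ is not simple, which means the paper's appeal to "$\mk[E']\bar1$ is a simple $S_1$-module" is itself only valid for $\tau=0$, and your degree-lowering plan hits exactly the same wall one line earlier. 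The argument — yours and the paper's — does go through cleanly in the case $\sigma=\tau=0$, where both $b'$ and $c'$ strictly lower degree and no eigenvector appears; if you want to salvage the proposal you should restrict to that case, or else supply a genuinely new idea for when one of $\sigma,\tau$ is nonzero.
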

\begin{proof}
It is obvious that $S/J_1(\sigma,\tau)\cong\mk[E',F']v$ with $b'v=\sigma v, c'v=\tau v$.

If $\sigma=0$, for any $x=\sum_{i=0}\varphi_i(E')(F')^i\not=0$, we have
\beqs b'xv=\sum_{i=1}^I\varphi_i(q^{-2mn}E')c_i(F')^{i-1}v\eeqs
for some nonzero constants $c_i, i\geq 1$. Induction on $I$, we may obtain a nonzero element $\varphi(E')v$. It is known that $\mk[E']v$ is a simple $S_1$-module, so this lemma holds.

If $\tau=0$, the proof is similar.
\end{proof}

\begin{prop}
Let $J_2(\sigma,\tau)$ be the left $S$-ideal generated by $b'-\sigma, E'-\tau$ for all $\sigma,\tau\in\mk$ such that $\sigma\tau=0$. Then $S/J_2(\sigma,\tau)$ is a simple $S$-module of GKdim $2$.
\end{prop}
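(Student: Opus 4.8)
The plan is to mimic the proof of the previous proposition, but now working with the subalgebra $S_1 = \mk[E',c']$ (on which the generators $E'$ and $c'$ act, with $E'c' = q^{2m^2}c'E' + 1$) together with $b'$ as the operator that ``rotates'' inside a weight component. First I would observe that $S/J_2(\sigma,\tau) \cong \mk[F',c']v$ as a vector space, where $b'v = \sigma v$ and $E'v = \tau v$; this is immediate from the triangular-style ordering of the four generators $b',c',E',F'$ and the commutation relations, since once $b'$ and $E'$ act as scalars on the cyclic vector $v$, the remaining freedom is spanned by words in $c'$ and $F'$, which commute with each other up to a nonzero scalar ($F'c' = q^{-2mn}c'F'$), so $\{(c')^i (F')^j v\}$ is a basis. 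The GKdim count is then immediate: this is a $q$-commutative polynomial ring in two variables acting freely on $v$, so $\mathrm{GKdim} = 2$.

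For simplicity I would split into the two cases $\sigma = 0$ and $\tau = 0$ exactly as in the preceding proof. Suppose $\tau = 0$, so $E'v = 0$. Take any nonzero $x \in \mk[F',c']$ and write $xv = \sum_{i=0}^I \psi_i(c')(F')^i v$ with $\psi_I \neq 0$. Applying $E'$ and using $E'F' = q^{-2mn}F'E'$ together with $E'c' = q^{2m^2}c'E' + 1$, the top-degree-in-$F'$ term gets killed off $E'v = 0$ while producing lower-order terms with nonzero constant coefficients coming from the ``$+1$'' in the relation $E'c' = q^{2m^2}c'E'+1$; iterating (induction on $I$) I can strip away all the $F'$'s and land on a nonzero element of the form $\psi(c')v \in \mk[c']v$. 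Then I invoke that $\mk[c']v$ is a simple $S_1$-module — this is the generalized Weyl algebra fact already used implicitly via Lemma~\ref{S-mod} (more precisely, the $S_1$-submodule generated by $v$, which is killed by $E'$, is exactly $\mk[c']v$ and is simple because $E'$ acts on $\mk[c']v$ as the $q$-difference operator which together with multiplication by $c'$ leaves no proper nonzero submodule). Hence the $S$-submodule generated by any nonzero $xv$ contains $v$, so $S/J_2(0,\tau)$ with $\tau=0$ is simple. The case $\sigma = 0$ (so $b'v = 0$) is symmetric: now one uses $b'$ to lower, with $E'b' = q^{2mn}b'E'$ and $F'b' = q^{-2n^2}b'F' + 1$, so applying $b'$ repeatedly to $\sum \psi_i(c')(F')^i v$ strips the $F'$'s via the ``$+1$'' in $F'b' = q^{-2n^2}b'F' + 1$, again reaching $\mk[c']v$, which is simple over $S_1$.

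The main obstacle, as in the earlier proposition, is making the inductive descent precise: one has to check that each application of the lowering operator ($E'$ in the case $\tau=0$, or $b'$ in the case $\sigma=0$) strictly decreases the $F'$-degree $I$ while leaving a nonzero coefficient, and that the nonzero scalars $c_i$ appearing never accidentally vanish. This is a routine but slightly fiddly computation: the key point is that $q$ is not a root of unity, so the powers of $q$ produced by commuting $E'$ (or $b'$) past $c'$ and $F'$ are all nonzero and the binomial-type coefficients that accumulate are nonzero polynomials in $q$ evaluated at a non-root-of-unity, hence nonzero. Once that descent is in hand, simplicity of $\mk[c']v$ over $S_1$ closes the argument, and the GKdim claim follows from the PBW-type basis exhibited at the start.
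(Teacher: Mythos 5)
Your $\sigma=0$ case is correct and matches the paper's argument verbatim: $b'$ lowers the $F'$-degree through the additive $+1$ in $F'b' = q^{-2n^2}b'F' + 1$, and one lands on a nonzero element of $\mk[c']v$, to which simplicity over $S_1$ is applied.

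Your $\tau=0$ case, however, contains a genuine error. When $E'v=0$, applying $E'$ to $xv = \sum_{i=0}^{I}\psi_i(c')(F')^i v$ does \emph{not} lower the $F'$-degree. Since $E'$ and $F'$ merely $q$-commute (no additive correction), the piece obtained by commuting $E'$ all the way past both $\psi_i(c')$ and $(F')^i$ is annihilated in its entirety by $E'v=0$; the only surviving contribution is the $q$-derivative-type term produced by the $+1$ in $E'c' = q^{2m^2}c'E'+1$, namely $E'\psi_i(c')(F')^i v = (\text{lower-}c'\text{-degree polynomial in }c')(F')^i v$. Thus iterating $E'$ strips away the $c'$'s, not the $F'$'s, and after finitely many steps one lands on a nonzero element $\psi(F')v \in \mk[F']v$, \emph{not} $\psi(c')v\in\mk[c']v$. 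Consequently the closing step must appeal to simplicity of $\mk[F']v$ as an $S_2=\mk[F',b']$-module, not of $\mk[c']v$ as an $S_1$-module. This mirrors the $J_1$-proposition, where the two cases terminate in $\mk[E']v$ and $\mk[F']v$ respectively; for $J_2$ the two cases should terminate in $\mk[c']v$ (when $\sigma=0$) and $\mk[F']v$ (when $\tau=0$). Your write-up uses $\mk[c']v$ and $S_1$ for both, which cannot work in the second case because $E'$ simply has no mechanism to lower the $F'$-degree.
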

\begin{proof}
First we have $S/J_2(\sigma,\tau)\cong\mk[c',F']v$ with $b'v=\sigma v, E'v=\tau v$.

If $\sigma=0$, for any $x=\sum_{i=0}\varphi_i(c')(F')^i\not=0$, we have
\beqs b'xv=\sum_{i=1}^I\varphi_i(q^{2mn}c')c_i(F')^{i-1}v\eeqs
for some nonzero constants $c_i, i\geq 1$. Induction on $I$, we may obtain a nonzero element $\varphi(c')v$. Because $\mk[c']v$ is also a simple $S_1$-module, so this lemma holds.

If $\tau=0$, the proof is similar.
\end{proof}

Similar to above, we state the following propositions without proof.

\begin{prop}
Let $J_3(\sigma,\tau)$ be the left $S$-ideal generated by $F'-\sigma, c'-\tau$ for all $\sigma,\tau\in\mk$ such that $\sigma\tau=0$. Then $S/J_3(\sigma,\tau)$ is a simple $S$-module of GKdim $2$.
\end{prop}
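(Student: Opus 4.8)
The plan is to follow exactly the template of the two propositions already proved, since $J_3(\sigma,\tau)$ is the mirror image (under the natural symmetry $b'\leftrightarrow c'$, $E'\leftrightarrow F'$) of $J_1(\sigma,\tau)$. First I would identify the underlying vector space: using the relations $F'c'=q^{-2mn}c'F'$, $E'b'=q^{2mn}b'E'$, and the commutation rules expressing $E'$ and $b'$ through $c',F'$, one checks that every element of $S$ can be rewritten modulo $J_3(\sigma,\tau)$ as a polynomial in $E'$ and $b'$ applied to the cyclic generator $v$; hence $S/J_3(\sigma,\tau)\cong\mk[E',b']v$ with $F'v=\sigma v$, $c'v=\tau v$. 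This also immediately gives GKdim $2$, since $\mk[E',b']$ is a (quantum) polynomial algebra in two variables.

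Next I would prove simplicity by the same reduction argument. Suppose $\sigma=0$. Take any nonzero $x=\sum_{i=0}^I\varphi_i(E')(b')^i$ (note the roles of the "raising" variable and the "scalar-eating" variable are swapped relative to Proposition with $J_1$, because now $F'$ rather than $c'$ acts by a scalar). Applying $c'$, using $F'c'=q^{-2mn}c'F'$ together with the relation that lets $c'$ move past $b'$ and rescale $E'$, one obtains $c'xv=\sum_{i\ge 1}\varphi_i(q^{\bullet}E')c_i(b')^{i-1}v$ for suitable nonzero constants $c_i$, lowering the degree in $b'$. Iterating, any nonzero submodule contains an element of the form $\varphi(E')v$; since $\mk[E']v$ is a simple $S_1$-module (here $S_1=\mk[E',c']$, and $c'v=0$ makes this the natural simple generalized-Weyl-algebra module), the submodule is everything. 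The case $\tau=0$ is symmetric, using $b'$ to kill powers of $E'$ and landing inside a simple $S_2=\mk[F',b']$-module generated by $v$.

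The only point requiring care — and the main (very mild) obstacle — is bookkeeping the exact $q$-powers when sliding $c'$ (or $b'$) past a monomial $(b')^i$ (or $(E')^i$) and verifying that the resulting coefficients $c_i$ are genuinely nonzero; this is where one must use that $q$ is not a root of unity, so that factors like $q^{2mn}-1$ or $q^{-2mn k}-1$ never vanish. Once that is checked the argument is word-for-word the one given for $J_1(\sigma,\tau)$ and $J_2(\sigma,\tau)$, which is why the statement is recorded without a detailed proof.
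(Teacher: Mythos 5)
Your identification of the module as $\mk[E',b']v$ with $F'v=\sigma v$, $c'v=\tau v$, and the GK\,dimension count, are correct, and the overall strategy (mimic the $J_1$, $J_2$ reductions) is the intended one. However, the reduction step has the operators swapped, which breaks the argument as written. Recall the only two relations in $S$ with an inhomogeneous term are
\[
F'b'=q^{-2n^2}b'F'+1,\qquad E'c'=q^{2m^2}c'E'+1,
\]
while $b'c'$, $E'b'$, $F'c'$, $E'F'$ all $q$-commute with no lower-order term. In the case $\sigma=0$ (so $F'v=0$), to lower the $b'$-degree of $x=\sum_i\varphi_i(E')(b')^iv$ you must apply $F'$: moving $F'$ past $\varphi_i(E')$ only rescales $E'$ (via $E'F'=q^{-2mn}F'E'$), and moving it past $(b')^i$ produces $q^{-2n^2i}(b')^iF'+[i]_{q^{-2n^2}}(b')^{i-1}$; since $F'v=0$ the leading term dies and the $b'$-degree drops, with nonzero coefficients because $q$ is not a root of unity. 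Applying $c'$ instead, as you propose, does nothing of the sort: $b'c'=q^{2mn}c'b'$ is a pure $q$-commutation, so $c'(b')^iv=q^{-2mni}\tau(b')^iv$ and the $b'$-degree is unchanged — the identity $c'xv=\sum_{i\ge1}\varphi_i(q^{\bullet}E')c_i(b')^{i-1}v$ you assert is simply false. Symmetrically, in the case $\tau=0$ you must apply $c'$ (using $E'c'=q^{2m^2}c'E'+1$ and $c'v=0$) to lower the $E'$-degree, not $b'$, since $E'b'=q^{2mn}b'E'$ has no inhomogeneous term and cannot "kill powers of $E'$."

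A second, smaller slip: in the case $\sigma=0$ you parenthetically set $c'v=0$, but the hypothesis is only $\sigma\tau=0$, so $\tau$ may be arbitrary when $\sigma=0$. The terminal observation should be that $\mk[E']v$ is a simple module over $S_1=\mk[E',c']$ for \emph{any} eigenvalue $c'v=\tau v$ (a standard fact for the quantum Weyl algebra, cf.\ Bavula--van Oystaeyen), not just for $\tau=0$. With the two operators swapped back and this point stated correctly, the argument does become the word-for-word analogue of the $J_1(\sigma,\tau)$ proof, which is what the paper intends by omitting it.
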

\begin{prop}
Let $J_4(\sigma,\tau)$ be the left $S$-ideal generated by $F'-\sigma, E'-\tau$ for all $\sigma,\tau\in\mk$ such that $\sigma\tau=0$. Then $S/J_4(\sigma,\tau)$ is a simple $S$-module of GKdim $2$.
\end{prop}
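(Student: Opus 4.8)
The plan is to mimic the structure of the proofs of the preceding two propositions, exploiting the fact that $S/J_4(\sigma,\tau)$ is a free module over the remaining two generators. Concretely, from the relations $E'F'=q^{-2mn}F'E'$, $E'b'=q^{2mn}b'E'$, $E'c'=q^{2m^2}c'E'+1$, $F'b'=q^{-2n^2}b'F'+1$, $F'c'=q^{-2mn}c'F'$, and $b'c'=q^{2mn}c'b'$, one checks that $S/J_4(\sigma,\tau)\cong\mk[b',c']v$ as a vector space, where $E'v=\tau v$, $F'v=\sigma v$, and $E',F'$ act on $\mk[b',c']v$ by the twisted formulas dictated by these commutation relations. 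Since $\sigma\tau=0$, at least one of $E',F'$ acts as a scalar.

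First I would treat the case $\sigma=0$ (the case $\tau=0$ is symmetric, obtained by interchanging the roles of $E'$ and $F'$, of $b'$ and $c'$, and replacing $q$ by $q^{-1}$). With $\sigma=0$, a general nonzero element of the module is $x v=\sum_{i=0}^{I}\varphi_i(c')(b')^i v$ with $\varphi_I\neq 0$; applying $F'$ and using $F'b'=q^{-2n^2}b'F'+1$ together with $F'c'=q^{-2mn}c'F'$ and $F'v=0$, one gets $F' x v=\sum_{i=1}^{I}\varphi_i(q^{-2mn}c')\,d_i\,(b')^{i-1}v$ for suitable nonzero constants $d_i$ (coming from the $q$-analogue of differentiating $(b')^i$). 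Thus $F'$ lowers the $b'$-degree by exactly one with a nonzero leading coefficient, so by induction on $I$ one produces a nonzero element of the form $\varphi(c')v$ inside the submodule generated by $xv$. Then I would invoke that $\mk[c']v$ is a simple module over the subalgebra $S_1=\mk[E',c']$ — this is exactly the generalized Weyl algebra fact already used in the excerpt (for $S_1$ the module $\mk[c']v$ with $E'$ acting as the $q$-difference operator $E'\cdot c'^j=(q^{2m^2}c')^j\tau\cdot(\cdots)+\cdots$ plus the shift is the standard simple module) — to conclude that the submodule contains $v$, hence is all of $S/J_4(\sigma,\tau)$. This shows simplicity.

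For the GKdim computation, I would note that $S/J_4(\sigma,\tau)$ is spanned by the monomials $(c')^i(b')^j v$, $i,j\geq 0$, and that these are linearly independent (the PBW-type basis of $S$ descends to a basis of the quotient), so the dimension of the span of words of length $\le N$ in $b',c',E',F'$ applied to $v$ grows like a polynomial of degree $2$ in $N$; hence $\mathrm{GKdim}=2$. The main obstacle — and the only genuinely non-routine point — is verifying that after the $q$-twisting the constants $d_i$ in the formula for $F'xv$ are indeed all nonzero and that the induction actually strips the leading term cleanly rather than producing unexpected cancellation; this hinges on $q$ not being a root of unity, so that the $q$-integers $[i]_{q^{-2n^2}}$ that appear as the $d_i$ never vanish. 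Everything else is a direct transcription of the arguments already given for $J_1,J_2,J_3$.
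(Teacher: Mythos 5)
Your proof is correct and is precisely the argument the paper intends: the paper states this proposition ``without proof,'' explicitly deferring to the same inductive degree-stripping argument used for $J_1$ and $J_2$, and your transcription of that argument (identifying $S/J_4\cong\mk[b',c']v$, using $F'$ to lower $b'$-degree via the nonvanishing $q$-integers $[i]_{q^{-2n^2}}$, and then invoking simplicity of $\mk[c']v$ over $S_1$) is exactly what ``similar to above'' refers to. The GKdim computation from the PBW basis is likewise routine and correct.
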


From above propositions, we obtain various simple $K$-weight modules (resp. $a$-weight modules) of GKdim $3$:
\beqs \sum_{i\in\mz}a^iS/J_k(\sigma,\tau),\quad\left(\hbox{resp.}\sum_{i\in\mz}K^iS/J_k(\sigma,\tau)\right)\quad k=1,2,3,4.\eeqs

In particular, when $m=n=1$, the $\od(\frak{b}_{m,n})$-modules
\beqs \sum_{i\in\mz}a^iS/J_k(0,0),\quad\left(\hbox{resp.}\sum_{i\in\mz}K^iS/J_k(0,0)\right)\quad k=1,2,3,4.\eeqs
cover those modules constructed in \cite{Tao2}.

\section{The prime and primitive spectra of $S$}
\def\pp{\frak{p}}
Let $\pp$ be a proper ideal of a ring $R$. Then for all ideals $\frak{i},\frak{j}$, if $\frak{ij}\subseteq\pp$ then either $\frak{i}\subseteq\pp$ or $\frak{j}\subseteq\pp$. In particular, if $R/\pp$ is a domain, the ideal $\pp$ must be prime. Let ${\rm Spec}(S)$ be the set of all prime ideals of $S$.

Define symbols
\beqs\phi_1=E'c'-c'E',\;\phi_2=F'b'-b'F'.\eeqs
\begin{lemma}
\beqs &&\phi_1\phi_2=\phi_2\phi_1,\;\phi_1F'=F'\phi_1,\;\phi_1b'=b'\phi_1,\;E'\phi_2=\phi_2E',\;c'\phi_2=\phi_2c',\\
&&\phi_1E'=q^{-2m^2}E'\phi_1,\;\phi_1c'=q^{2m^2}c'\phi_1,\;F'\phi_2=q^{-2n^2}\phi_2F',\;b'\phi_2=q^{2n^2}\phi_2b'.
\eeqs
\end{lemma}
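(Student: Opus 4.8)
The plan is to reduce all eight identities to short computations inside the two generalized Weyl subalgebras $S_1=\mk[E',c']$ and $S_2=\mk[F',b']$, together with the $q$-commutation of their generators. The first step is to put $\phi_1$ and $\phi_2$ into normal-ordered form: from $E'c'=q^{2m^2}c'E'+1$ one gets $\phi_1=(q^{2m^2}-1)\,c'E'+1\in S_1$, and from $F'b'=q^{-2n^2}b'F'+1$ one gets $\phi_2=(q^{-2n^2}-1)\,b'F'+1\in S_2$, so that after this affine rescaling $\phi_1$ plays the role of $c'E'$ and $\phi_2$ that of $b'F'$.

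The identities then fall into three groups. The relations $\phi_1E'=q^{-2m^2}E'\phi_1$ and $\phi_1c'=q^{2m^2}c'\phi_1$ live entirely in $S_1$: substitute the normal form of $\phi_1$, reorder using $E'c'=q^{2m^2}c'E'+1$, and compare coefficients in the PBW basis $\{c'^iE'^j\}$. The symmetric pair $F'\phi_2=q^{-2n^2}\phi_2F'$ and $b'\phi_2=q^{2n^2}\phi_2b'$ is handled identically in $S_2$ via $F'b'=q^{-2n^2}b'F'+1$. For the cross relations $\phi_1F'=F'\phi_1$ and $\phi_1b'=b'\phi_1$ observe that commuting $F'$ (resp. $b'$) past $c'E'$ invokes $F'c'=q^{-2mn}c'F'$ and $E'F'=q^{-2mn}F'E'$ (resp. $b'c'=q^{2mn}c'b'$ and $E'b'=q^{2mn}b'E'$): the factor $q^{\pm2mn}$ is picked up once across $c'$ and once across $E'$ with opposite exponents and so cancels, whence $c'E'$, and therefore $\phi_1$, commutes with both $F'$ and $b'$. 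The same bookkeeping shows that $b'F'$, hence $\phi_2$, commutes with $E'$ and with $c'$, which is the pair $E'\phi_2=\phi_2E'$, $c'\phi_2=\phi_2c'$. Finally $\phi_1\phi_2=\phi_2\phi_1$ is automatic, since $\phi_2\in\mk[F',b']$ and $\phi_1$ has just been shown to commute with both generators of that algebra.

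The one point that needs care is the first group. Since $\phi_1$ differs from the monomial $c'E'$ by an additive constant, $c'E'$ by itself does not $q^{-2m^2}$-commute with $E'$; a degree-one remainder appears, and it is exactly the constant term $1$ in $E'c'=q^{2m^2}c'E'+1$ that cancels this remainder so that the clean identity $\phi_1E'=q^{-2m^2}E'\phi_1$ holds (and likewise for $\phi_1c'$ and for the $S_2$-counterparts). Everything else is a direct substitution of the normal forms into the defining relations. Conceptually the lemma records that $\phi_1$ centralizes $S_2$ and $q$-commutes with the generators of $S_1$, while $\phi_2$ centralizes $S_1$ and $q$-commutes with the generators of $S_2$; this is the structural input for the analysis of ${\rm Spec}(S)$ to follow.
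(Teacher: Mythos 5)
Your proof is correct and takes the same approach the paper does (the paper simply writes "It follows by directly computing" with no detail). Your normal forms $\phi_1=(q^{2m^2}-1)c'E'+1$ and $\phi_2=(q^{-2n^2}-1)b'F'+1$ and the resulting three-case organization (relations internal to $S_1$, relations internal to $S_2$, and the cross relations where the $q^{\pm 2mn}$ factors cancel) are a faithful and clean expansion of that direct computation, and I checked a sample of the identities against the listed $q$-commutation rules of $S$ — they all hold.
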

\begin{proof}It follows by directly computing.
\end{proof}
Assume $d=(m,n)$. Set $(0)=\{0\}, I_1=S\phi_1, I_2=S\phi_2, I_3=I_1+I_2$.
If $mn>0$, we set
\beqs J_1(z)=S(\phi_1^{|n|/d}-zb'^{|m|/d})+S\phi_2, \quad J_2(z)=S\phi_1+S(\phi_2^{|m|/d}-zc'^{|n|/d}).\eeqs
If $mn<0$, we set
\beqs J_1(z)=S(\phi_1^{|n|/d}-zF'^{|m|/d})+S\phi_2, \quad J_2(z)=S\phi_1+S(\phi_2^{|m|/d}-zE'^{|n|/d}).\eeqs
\begin{prop}The following statements hold.
\item 1. Both $S/I_1$ and $S/I_2$ are domains.
\item 2. $S/I_3$ is simple.
\item 3. Both $S/J_1(z)$ and $S/J_2(z)$ are domains.
\end{prop}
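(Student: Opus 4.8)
The plan is to exploit the commutation relations of the lemma, which say that $\phi_1$ and $\phi_2$ are normal elements of $S$ (each $q$-commutes with every generator and commutes with the other $\phi_i$, $F'$, $b'$ for $\phi_1$ and with $E'$, $c'$ for $\phi_2$). Consequently $I_1 = S\phi_1$, $I_2 = S\phi_2$ are two-sided ideals, and I would compute the quotients explicitly. For part 1, I would show $S/I_1$ is the $\mk$-algebra on the images of $b',c',E',F'$ with the original relations plus $E'c' = c'E'$; rewriting, $F'b' = q^{-2n^2}b'F' + 1$ still holds while $c'$ becomes central-up-to-$q$-powers against $E'$. The cleanest route is to realize $S/I_1$ as an iterated Ore extension: start from $\mk[c']$, adjoin $E'$ (now commuting with $c'$ up to scalar, so a skew polynomial ring), then adjoin $b'$ and $F'$ via the relations $E'b'=q^{2mn}b'E'$, $F'b'=q^{-2n^2}b'F'+1$, $F'c'=q^{-2mn}c'F'$, $E'F'=q^{-2mn}F'E'$, checking at each stage that the twisting map is an automorphism and the derivation (only the $+1$ term, between $F'$ and $b'$) is compatible. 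Since an iterated skew polynomial extension of a domain by automorphisms and derivations is a domain, $S/I_1$ is a domain; symmetrically for $S/I_2$ (where instead $F'b'=b'F'$ survives after killing $\phi_2$ and the $+1$ now sits in the $E',c'$ relation). This also makes $I_1,I_2$ prime by the remark preceding the lemma.

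For part 2, $S/I_3 = S/(S\phi_1 + S\phi_2)$ imposes both $E'c'=c'E'$ and $F'b'=b'F'$. After these identifications the surviving relations are $E'F'=q^{-2mn}F'E'$, $b'c'=q^{2mn}c'b'$, $E'b'=q^{2mn}b'E'$, $F'c'=q^{-2mn}c'F'$, and crucially the inhomogeneous relations $F'b' = q^{-2n^2}b'F'+1$ and $E'c' = q^{2m^2}c'E'+1$ collapse to $q$-Weyl-type identities $(1-q^{-2n^2})b'F' = -1$ — wait, more carefully, $F'b'=b'F'$ together with the original $F'b'=q^{-2n^2}b'F'+1$ forces $b'F'(1-q^{-2n^2}) = 1$, so $b'$ and $F'$ become mutually inverse up to the nonzero scalar $(1-q^{-2n^2})^{-1}$ (here one uses that $q$ is not a root of unity, so $q^{-2n^2}\neq 1$); likewise $c'$ and $E'$ become mutually inverse up to scalar. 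Thus $S/I_3$ becomes a quantum torus in the two variables $b'$ (with inverse $\propto F'$) and $c'$ (with inverse $\propto E'$), with $b'c'=q^{2mn}c'b'$. Since $q^{2mn}$ is not a root of unity, this quantum torus is a simple ring, giving part 2.

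For part 3, I would first check $J_1(z)\supseteq S\phi_2 = I_2$, so $S/J_1(z)$ is a quotient of $S/I_2$, which by part 1 is a domain; then I must show the extra relation is compatible and the quotient remains a domain rather than collapsing. In $S/I_2$ the element $\phi_1 = E'c'-c'E' = (q^{2m^2}-1)c'E'$ is (a nonzero scalar times) the normal element $c'E'$; one checks $\phi_1^{|n|/d}$ is still normal in $S/I_2$ and $q$-commutes with $b'^{|m|/d}$ (resp. $F'^{|m|/d}$) with the \emph{same} scalar — this is exactly the point of choosing the exponents $|n|/d$ and $|m|/d$: the powers $q^{2m^2\cdot|n|/d}$ appearing from $\phi_1$ and $q^{2mn\cdot|m|/d}$ from $b'$ or $F'$ must match, which holds because $m\cdot(n/d) = n\cdot(m/d)$ (up to sign, handled by the $mn>0$ vs. $mn<0$ split). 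Hence $\phi_1^{|n|/d} - z\,b'^{|m|/d}$ (resp. $-z\,F'^{|m|/d}$) is again a normal element of the domain $S/I_2$, so the further quotient is a domain provided this element is not a unit and is prime; primeness follows because factoring it out yields a localization-of-Ore-extension picture analogous to part 1. The symmetric argument with $I_1$ handles $S/J_2(z)$.

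\textbf{Main obstacle.} The bookkeeping in part 3 — verifying that the normalizing scalars of $\phi_i^{|n|/d}$ against $b'^{|m|/d}$, $c'^{|n|/d}$, $E'^{|n|/d}$, $F'^{|m|/d}$ coincide, in each of the four sign cases $mn>0$, $mn<0$ — is the delicate step; it is where the specific exponents $|m|/d,|n|/d$ and the case distinction are forced, and one must confirm the resulting element is genuinely a nonzero non-unit normal element so that the quotient is a domain and not zero or a skew-Laurent ring that fails to be a domain.
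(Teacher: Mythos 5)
Part 2 of your proposal is correct and is exactly the paper's argument: both inhomogeneous relations collapse, $b'F'$ and $c'E'$ become nonzero scalars, and $S/I_3$ is the simple quantum torus $\mk[b'^{\pm1},c'^{\pm1}]$ (the paper writes it as $\mk[E'^{\pm1},F'^{\pm1}]$, which is the same algebra since $E'\propto c'^{-1}$ and $F'\propto b'^{-1}$).

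Part 1 has a genuine mistake. You write that $S/I_1$ is obtained by starting from $\mk[c']$ and adjoining $E'$ as an Ore variable commuting with $c'$, then adjoining $b'$ and $F'$. But setting $\phi_1 = E'c'-c'E' = 0$ does not merely replace the $q$-commutation of $E'$ and $c'$ by commutation; combined with the surviving relation $E'c'=q^{2m^2}c'E'+1$ it forces $(1-q^{2m^2})c'E'=1$, so $c'E'$ is a nonzero scalar and both $c'$ and $E'$ become units, each a scalar multiple of the other's inverse. The algebra you construct, $\mk[c'][E';\sigma][F';\dots][b';\dots]$, has GK dimension $4$; the actual quotient $S/I_1$ has GK dimension $3$ and is $\mk[c',c'^{-1}][F',b']$ (equivalently $\mk[E',E'^{-1}][F',b']$), an iterated skew polynomial ring over a Laurent polynomial ring. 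You even use this exact mechanism in Part 2 (where you note $b'F'(1-q^{-2n^2})=1$), but omit it here — so your presentation of $S/I_1$ is of the wrong algebra. The repair is exactly the paper's one-line observation that $E'c'\in\mk^*$ in the quotient.

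Part 3 is incomplete. Your normality computation for $\phi_1^{|n|/d}-zb'^{|m|/d}$ in $S/I_2$ is correct, and the matching of the scalars $q^{2m^2|n|/d}$ and $q^{2mn|m|/d}$ is indeed where the exponents $|m|/d,|n|/d$ and the sign case split come from. However, a nonzero nonunit normal element of a domain need not be prime (e.g.\ $xy$ in $\mk[x,y]$), so "normal plus nonunit" does not yield a domain quotient. Your sentence "primeness follows because factoring it out yields a localization-of-Ore-extension picture analogous to part 1" is an assertion, not an argument, and is precisely the nontrivial content of this part. The paper closes the gap by a different, more explicit route: since $\phi_2=0$ makes $b'F'$ a unit, $S/J_1(z)$ is exhibited as a free $S_1$-module of rank $|m|/d$ with basis $1,b',\dots,b'^{|m|/d-1}$ and $b'^{|m|/d}=z^{-1}\phi_1^{|n|/d}\in S_1$, i.e.\ a finite (Kummer-type) extension of the domain $S_1=\mk[E',c']$, which is then checked to be a domain. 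You would need some version of this concrete identification (or a direct verification of primeness of the normal element) to complete Part 3.
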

\begin{proof}  1. In the quotient algebra $S/S\phi_1$, we have $E'c'=c'E'$, then $E'c'=q^{2m^2}c'E'+1$ implies $E'c'\in\mk^*$. Consequently,
\beqs S/I_1\cong\mk[E',E'^{-1}][F',b']\cong\mk[c',c'^{-1}][F',b'],\eeqs
which is a domain. In a similar way, $S/I_2$ also is a domain.

2. The quotient algebra $S/I_3$ is isomorphic to the quantum torus $\mk[E'^{\pm1},F'^{\pm1}]$, which is simple.

3. $S/J_1(z)$ is isomorphic to a finite extension of $S_1$. Explicitly,
\beqs S/J_1(z)\cong S_1+S_1\sqrt[|m|]{\phi_1^{|n|}}+S_1\sqrt[|m|]{\phi_1^{2|n|}}+\cdots+S_1\sqrt[|m|]{\phi_1^{(|m|/d-1)|n|}},\eeqs
which is a domain. In special, when $m=\pm n$, $S/J_1(z)\cong S_1$ is simple. The proof for $S/J_2(z)$ is very similar.
\end{proof}

\begin{lemma}
Let $\Omega=\{zb'^ic'^j|z\in\mk^*,i,j\in\mathbb{N}\}$. If $\pp$ is a prime ideal of $S$, then $\Omega\cap\pp=\emptyset$.
\end{lemma}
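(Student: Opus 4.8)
The plan is to prove a statement slightly stronger than what is asked, namely that for every $i,j\in\NN$ the two-sided ideal $Sb'^ic'^jS$ generated by the single element $b'^ic'^j$ is all of $S$. Granting this, if $\pp$ is a prime ideal of $S$ — hence a proper ideal — and $zb'^ic'^j\in\pp$ with $z\in\mk^*$, then $b'^ic'^j=z^{-1}(zb'^ic'^j)\in\pp$ forces $S=Sb'^ic'^jS\subseteq\pp$, which is absurd; so $\Omega\cap\pp=\emptyset$.

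To see that $Sb'^ic'^jS=S$, I would first iterate the defining relations $F'b'=q^{-2n^2}b'F'+1$ and $E'c'=q^{2m^2}c'E'+1$ to obtain, by a routine induction on the exponents, the $q$-Leibniz identities
\[
F'b'^i=q^{-2in^2}b'^iF'+\lambda_i b'^{i-1},\qquad E'c'^j=q^{2jm^2}c'^jE'+\mu_j c'^{j-1},
\]
where $\lambda_i=\sum_{l=0}^{i-1}q^{-2ln^2}$ and $\mu_j=\sum_{l=0}^{j-1}q^{2lm^2}$. The one place the standing hypotheses enter is the observation that these scalars are nonzero: since $q$ is not a root of unity and $m,n$ are nonzero integers, $q^{-2n^2}$ and $q^{2m^2}$ are not roots of unity either, so $\lambda_i=(1-q^{-2in^2})/(1-q^{-2n^2})\neq 0$ and likewise $\mu_j\neq 0$.

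Now fix $i,j$ and set $I=Sb'^ic'^jS$. Combining the first identity with $F'c'=q^{-2mn}c'F'$ gives
\[
F'\cdot b'^ic'^j=q^{-2in^2-2jmn}\,b'^ic'^jF'+\lambda_i\,b'^{i-1}c'^j,
\]
so $\lambda_i b'^{i-1}c'^j\in I$ and hence $b'^{i-1}c'^j\in I$, i.e. $Sb'^{i-1}c'^jS\subseteq I$. Descending on the first exponent we reach $c'^j\in I$, hence $Sc'^jS\subseteq I$; then $E'c'^j=q^{2jm^2}c'^jE'+\mu_j c'^{j-1}$ together with $\mu_j\neq 0$ gives $c'^{j-1}\in I$, and descending on the second exponent we reach $1\in I$. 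Thus $I=S$, the degenerate cases $i=0$ or $j=0$ being special instances of this.

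I do not expect a genuine obstacle: the argument is two nested descending inductions built on the defining commutation relations, and its only substantive ingredient is the non-root-of-unity hypothesis, which is exactly what prevents the scalars $\lambda_i,\mu_j$ from vanishing. The points that need a little care are purely bookkeeping: checking that ``$b'^kc'^l\in I$'' genuinely yields the ideal inclusion $Sb'^kc'^lS\subseteq I$ that legitimizes the descending inductions, and noting that primeness of $\pp$ is used here only through the fact that a prime ideal is proper.
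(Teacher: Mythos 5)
Your proof is correct and takes essentially the same route as the paper's: it reduces $b'^ic'^j$ to $1$ by repeated commutation with $F'$ (descending in $i$) and then $E'$ (descending in $j$), using the non-root-of-unity hypothesis to guarantee the $q$-integer coefficients are nonzero. Framing the argument as $Sb'^ic'^jS=S$ is a minor rephrasing, and your explicit formulas for $\lambda_i,\mu_j$ simply make precise what the paper's ``$\in\mk^*b'^{i-1}c'^j$'' leaves implicit.
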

\begin{proof}
If $zb'^ic'^j\in\pp$, we have $b'^ic'^j\in\pp$. When $i>0$,
\beqs F'b'^ic'^j-q^{-2mnj-2n^2i}b'^ic'^jF'\in\mk^*b'^{i-1}c'^j,\eeqs
thus $b'^{i-1}c'^j\in\pp$. By induction, we infer that $c'^j\in\pp$. Similarly by $E'c'=q^{2m^2}c'E'+1$, we obtain $1\in\pp$ and $\pp=S$, a contradiction.
\end{proof}
\begin{theo}
The prime spectrum of the algebra $S$ is given below,
\beqs{\rm Spec}(S)&=&\{(0), I_1, I_2, I_3\}\cup\{J_1(z)|z\in\mk^*\}\cup\{J_2(z)|z\in\mk^*\}.\eeqs
All the containments of the prime ideals of $S$ are shown in the following diagram
\vskip3mm
\centerline{
\xymatrix{\{J_1(z)|z\in\mk^*\}\ar@{-}[rd]&&I_3\ar@{-}[rd]\ar@{-}[ld]&&\{J_1(z)|z\in\mk^*\}\ar@{-}[ld]\\
&I_1\ar@{-}[rd]&&I_2\ar@{-}[ld]&\\
&&(0)&&}
}
\end{theo}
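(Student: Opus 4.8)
The plan is to prove that the six listed ideals are exactly the prime ideals of $S$ and then to establish the containment diagram. The strategy has two parts: first show that each listed ideal is indeed prime, and then show there are no others.

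For the first part, primality of $(0)$ amounts to $S$ being a domain, which follows because $S$ is an iterated Ore extension (it is built from $\mk[b',c']$ by adjoining $F'$ and $E'$ with the relations listed after the triangular decomposition, all of which are $q$-skew derivations). For $I_1$, $I_2$, $I_3$, $J_1(z)$, $J_2(z)$, the preceding Proposition already identifies the quotients: $S/I_1$ and $S/I_2$ are quantum-torus-type domains, $S/I_3 \cong \mk[E'^{\pm1},F'^{\pm1}]$ is simple (hence $I_3$ is a maximal, so prime, ideal), and $S/J_1(z)$, $S/J_2(z)$ are finite domain extensions of $S_1$. Since each quotient is a domain, each of these ideals is prime by the remark at the top of the section. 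So the containment "$\supseteq$" in the Spec formula is the easy half.

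The substantive half is showing every prime $\pp$ of $S$ is on the list. Here I would use the element $\phi_1 = E'c'-c'E'$ and $\phi_2 = F'b'-b'F'$ and the Lemma that $\Omega \cap \pp = \emptyset$, i.e. no monomial $b'^ic'^j$ lies in a proper prime. Localize $S$ at the (Ore) set $\Omega$, or equivalently work modulo the question of whether $\phi_1, \phi_2 \in \pp$. Case analysis: (a) if $\phi_1 \in \pp$ and $\phi_2 \in \pp$ then $I_3 \subseteq \pp$, and since $S/I_3$ is simple we get $\pp = I_3$. (b) If $\phi_1 \in \pp$ but $\phi_2 \notin \pp$, then $\pp/I_1$ is a nonzero-or-zero prime of $S/I_1 \cong \mk[E'^{\pm1}][F',b']$; one checks $\phi_2$ becomes (a unit multiple of) $1$ in the further quotient unless $\pp = I_1$... more precisely one analyzes primes of the quantum-affine-type algebra $\mk[E'^{\pm1}][F',b']$ and finds the only ones avoiding $\Omega$ and not containing $\phi_2$'s image nontrivially are $I_1$ itself — I would show any prime strictly containing $I_1$ must contain $\phi_2$ (forcing case (a)) by using the commutation $\phi_1 b' = b'\phi_1$, $F' \phi_2 = q^{-2n^2}\phi_2 F'$, etc. Symmetrically for $\phi_2 \in \pp$, $\phi_1 \notin \pp$ giving $\pp = I_2$. (c) The main case is $\phi_1, \phi_2 \notin \pp$: then in $S/\pp$ both $\phi_1$ and $\phi_2$ are nonzero, and the commutation relations from the first Lemma of the section show $\phi_1^{|n|}b'^{-|m|}$ (when $mn>0$) or $\phi_1^{|n|}F'^{-|m|}$ (when $mn<0$) is central; a central element in a prime quotient is either zero (excluded) or we can ask whether it is "scalar", and the generic behaviour produces a one-parameter family — this is where $J_1(z)$ and $J_2(z)$ enter, parametrized by the value $z$ this central element takes. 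One must check that $\pp$ contains one of $J_1(z)$ or $J_2(z)$ and then, since those quotients are domains of the minimal possible size (finite over $S_1$, itself a GK-dimension-$2$ simple-ish algebra with no proper nonzero primes meeting $\Omega$), conclude $\pp = J_1(z)$ or $J_2(z)$.

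I expect the main obstacle to be exactly case (c): pinning down that the only primes avoiding $\Omega$ with $\phi_1,\phi_2$ both nonzero in the quotient are the $J_i(z)$, and in particular that there is no prime lying strictly between $J_1(z)$ and $(0)$ other than possibly $I_1$ — equivalently, that $J_1(z) \not\supseteq I_1$ (so $J_1(z)$ and $I_1$ are incomparable, as the diagram shows) and that $S/J_1(z)$ has no proper nonzero primes surviving localization at $\Omega$. This requires understanding the prime spectrum of the generalized Weyl algebra $S_1 = \mk[E',c']$ (and $S_2$) well enough to know its only $\Omega$-avoiding prime is $(0)$; that is a known fact about GWAs over a field that I would cite. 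Once case (c) is settled, the containment diagram is immediate: $I_1, I_2 \subseteq I_3$ by construction; $I_1, I_2 \supseteq (0)$; $J_1(z) \supseteq I_2$ is checked directly from the definition of $J_1(z)$ (it contains $S\phi_2 = I_2$), and symmetrically $J_2(z) \supseteq I_1$; and the non-edges (e.g. $J_1(z) \not\subseteq I_3$, $J_1(z)$ vs $I_1$ incomparable, distinct $J_1(z)$ incomparable) follow by comparing the central-element values $z$ and by the domain-versus-simple distinction among the quotients.
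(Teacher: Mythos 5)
Your overall scaffolding — show each listed ideal is prime, then localize at $\Omega$ and run a case analysis on whether $\phi_1$, $\phi_2$ lie in $\pp$ — is the same as the paper's (the paper inverts $\Omega$, then inverts $\phi_1,\phi_2$ to land in a simple quantum torus $\mathbb{S}$, and deduces that every nonzero prime contains $\phi_1$ or $\phi_2$). However, your case analysis is organized incorrectly, and this is a real gap rather than a presentational one.

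The error is in cases (b) and (c). You claim (b) "$\phi_1\in\pp$, $\phi_2\notin\pp$" forces $\pp=I_1$, and you place $J_1(z)$, $J_2(z)$ in case (c) "$\phi_1,\phi_2\notin\pp$". But by their very definition $J_1(z)\supseteq S\phi_2=I_2$ and $J_2(z)\supseteq S\phi_1=I_1$, so each $J_i(z)$ contains exactly one of $\phi_1,\phi_2$; they belong in case (b) and its mirror image, not in case (c). In fact you notice this yourself in your last paragraph when you read off $J_1(z)\supseteq I_2$ and $J_2(z)\supseteq I_1$ from the definitions — which directly contradicts your claim in (b) that any prime strictly above $I_1$ must contain $\phi_2$ (take $\pp=J_2(z)$: in $S/J_2(z)$, $c'$ is a unit and $\phi_2^{|m|/d}=z\,c'^{|n|/d}\neq 0$, so $\phi_2\notin J_2(z)$). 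Case (c), on the other hand, gives only $\pp=(0)$: once $\phi_1,\phi_2\notin\pp$, the prime $\pp$ survives into the simple quantum torus $\mathbb{S}=S\Omega_1^{-1}$ and must therefore be zero. Relatedly, the element you propose as central in case (c), $\phi_1^{|n|}b'^{-|m|}$, is \emph{not} central when $\phi_2\neq 0$: from $b'\phi_2=q^{2n^2}\phi_2 b'$ one gets $\phi_2\,\phi_1^{|n|}b'^{-|m|}=q^{2n^2|m|}\phi_1^{|n|}b'^{-|m|}\phi_2$, a nontrivial twist. That element only becomes central after passing to the quotient by $\phi_2$, which is exactly why the resulting one-parameter family sits above $I_2$ (and symmetrically for the other $J$-family above $I_1$). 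The paper's proof carries out precisely this: after reducing modulo $I_1\Omega^{-1}$ (resp.\ $I_2\Omega^{-1}$) it identifies, via conjugation by $b'$, the only irreducible elements that can generate a larger prime, namely $\phi_1^{|n|/d}-z\,b'^{|m|/d}$ and its mirror. So the substance you flag as "the main obstacle" is exactly where the argument must live, but it has to happen one step earlier, inside the "one of $\phi_1,\phi_2$ in $\pp$" branch, not in the branch where both are excluded.
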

\begin{proof}We only prove for $mn>0$.

By Lemma 6.3, the spectrum of $S$ is homeomorphic to the spectrum of its Ore extension $S\Omega^{-1}$. Now we consider the Ore extension
$\mathbb{S}=S\Omega_1^{-1}$ with Ore set
\beqs \Omega_1=\{zb'^ic'^j\phi_1^k\phi_2^l|z\in\mk^*,i,j,k,l\in\mathbb{N}\}.\eeqs
In particular, $\mathbb{S}$ is the localization of $S\Omega^{-1}$ obtained by inverting the elements $\phi_1,\phi_2$.

The algebra $\mathbb{S}$ is a quantum torus with the structural matrix
\beqs Q=\left[\begin{array}{cccc}1&q^{2m^2}&1&1\\q^{-2m^2}&1&q^{-2mn}&1\\1&q^{2mn}&1&q^{2n^2}\\1&1&q^{-2n^2}&1\end{array}\right],\eeqs
associated to the variables $\phi_1,c',b',\phi_2$.

Let $\pp_1$ be a prime ideal of $\mathbb{S}$ and given a nonzero element $x\in\pp_1$ such that
\beqs x=\sum_{i,j,k,l}z_{i,j,k,l\mz}b'^ic'^j\phi_1^k\phi_2^l,\eeqs
with finitely many nonzero coefficients. Because
\beqs \phi_1 x-q^{2m^2j_0}x\phi_1=\sum_{i,j,k,l\in\mz}(1-q^{2m^2(j_0-j)})z_{i,j,k,l}b'^ic'^j\phi_1^k\phi_2^l,\eeqs
we can choose $j_0$ such that $z_{i,j_0,k,l}$ for some $i,k,l$. Either $z_{i,j,k,l}=0$ for all $j\not=j_0$ or we obtain a nonzero element with less nonzero coefficients.

Consider the powers $i,k,l$ in the same way and by induction on the number of nonzero coefficients, we can find a monomial  $zb'^ic'^j\phi_1^k\phi_2^l\in\pp_1$. Therefore $1\in\pp_1$ and it is impossible. This implies $\mathbb{S}$ is simple.

Consequently, for any nonzero prime ideal $\pp$ of $S$ (equivalent to consider the prime ideal $\pp\Omega^{-1}$ of $S\Omega^{-1}$), either $I_1\subseteq\pp$ or $\phi_2\subseteq\pp$.

When $I_3\in\pp$, since $S/I_3$ is isomorphic to a quantum torus, which is simple, we obtain $\pp=I_3$.

Assume $I_1\subseteq\pp\not=I_3$. Because $S\Omega^{-1}/I_1\Omega^{-1}\cong \mk[b',b'^{-1},c',c'^{-1},\phi_2]$,
there exists an irreducible polynomial
\beqs y=\sum_{i,j\in\mz,k\in\mathbb{N}}z_{i,j,k}b'^ic'^j\phi_2^k\in\pp\Omega^{-1}.\eeqs
Similar to above approach, by considering the number of nonzero coefficients of
\beqs\phi_2y-q^{-2n^2i_0}y\phi_2,\eeqs
we may assume $z_{i,j,k}=0$ except $i=i_0$ for some $i_0$. Multiple by $b'^{-i_0}$, we obtain a nonzero irreducible element
\beqs y_1=\sum_{j\in\mz,k\in\mathbb{N}}z_{j,k}c'^j\phi_2^k\in\pp\Omega^{-1},\eeqs
and hence some
\beqs y_2=\sum_{j=0}^rf_{j}(c')\phi_2^j\in\pp,\eeqs
with $r>0$ and $f_0\not=0$. Because
\beqs b'^iy_2=\sum_{j=0}^rq^{2n^2ij}f_{j}(q^{2mni}c')\phi_2^jb'^i,\eeqs
it implies
\beqs \sum_{j=0}^rq^{2n^2ij}f_{j}(q^{2mni}c')\phi_2^j\in\pp,\eeqs
for all $i\in\mathbb{N}.$ Therefore, we can assume
\beqs y_2=\sum_{j=0}^rz_jc'^{i_j}\phi_2^j\in\pp,\eeqs
such that $nj-mi_j$ is a constant. Such irreducible element of the polynomial ring $\mk[c',\phi_2]$ has to be
$\phi_2^{|m|/d}-zc'^{|n|/d}$ for some nonzero constant $z$.  So $J_1(z)\subseteq\pp$. Because $S/J_1(z)$ is simple, we have $\pp=J_1(z)$.

The proof for $I_2\subseteq\pp\not=I_3$ is similar. \end{proof}

\begin{theo}The prime spectrum of the algebra $\od(\frak{b}_{m,n})$ is given below,
\beqs{\rm Spec}(S)=\{(0),\od(\frak{b}_{m,n})^0I_1,\od(\frak{b}_{m,n})^0I_2, \od(\frak{b}_{m,n})^0I_3\}\cup\{\od(\frak{b}_{m,n})^0J_1(z),\od(\frak{b}_{m,n})^0J_2(z)|z\in\mk^*\}.\eeqs
All the containments of the prime ideals of $S$ are shown in the following diagram
\vskip3mm
\centerline{
\xymatrix{\{\od(\frak{b}_{m,n})^0J_1(z)|z\in\mk^*\}\ar@{-}[d]&\od(\frak{b}_{m,n})^0I_3\ar@{-}[rd]\ar@{-}[ld]&\{\od(\frak{b}_{m,n})^0J_1(z)|z\in\mk^*\}\ar@{-}[d]\\
\od(\frak{b}_{m,n})^0I_1\ar@{-}[rd]&&\od(\frak{b}_{m,n})^0I_2\ar@{-}[ld]&\\
&(0)&}
}
\end{theo}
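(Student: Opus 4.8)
The plan is to deduce the result from the previous theorem via the algebra isomorphism $\od(\frak{b}_{m,n})\cong\od(\frak{b}_{m,n})^0\otimes S$ recorded in Section~2, where $T:=\od(\frak{b}_{m,n})^0=\mk[K^{\pm1},a^{\pm1}]$ is the quantum torus with $Ka=q^{-1}aK$. The first step is to note that, since $q$ is not a root of unity, $T$ is a simple $\mk$-algebra with centre exactly $\mk$: a Laurent monomial $K^ia^j$ commutes with both $K$ and $a$ only when $q^i=q^j=1$, i.e. $i=j=0$, so $Z(T)=\mk$; and any nonzero two-sided ideal of $T$ contains, after repeatedly replacing a nonzero element by a suitable commutator with $K$ or with $a$ (each such step strictly lowering the number of monomials occurring), a single Laurent monomial, which is a unit, so the ideal is all of $T$.

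The second step is the ideal-correspondence principle for tensoring with $T$. The crucial point is: for any $\mk$-algebra $B$, every nonzero two-sided ideal $\mathfrak{I}$ of $T\otimes_\mk B$ contains a nonzero element of the form $1\otimes b$. To see this, take $0\neq x=\sum_{i=1}^{r}t_i\otimes b_i\in\mathfrak{I}$ with $b_1,\dots,b_r$ linearly independent over $\mk$ and $r$ minimal; since $Tt_1T=T$, replacing $x$ by $\sum_j(c_j\otimes1)x(d_j\otimes1)$ for suitable $c_j,d_j\in T$ we may assume $t_1=1$; then for every $t\in T$ the element $[\,t\otimes1,\,x\,]=\sum_{i\ge2}[t,t_i]\otimes b_i\in\mathfrak{I}$ has fewer than $r$ terms, hence vanishes, forcing $t_i\in Z(T)=\mk$ for $i\ge2$, so that $x=1\otimes\bigl(b_1+\sum_{i\ge2}t_ib_i\bigr)$ is the desired element (and minimality of $r$ then even gives $r=1$). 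Granting this, for a two-sided ideal $\mathfrak{I}$ of $T\otimes S\cong\od(\frak{b}_{m,n})$ set $J=\{s\in S:1\otimes s\in\mathfrak{I}\}$, a two-sided ideal of $S$; then $T\otimes J\subseteq\mathfrak{I}$, and applying the crucial point to the ideal $\mathfrak{I}/(T\otimes J)$ of $(T\otimes S)/(T\otimes J)\cong T\otimes(S/J)$ forces that ideal to be zero, i.e. $\mathfrak{I}=T\otimes J$. Hence $\mathfrak{I}\mapsto J$ and $J\mapsto T\otimes J=\od(\frak{b}_{m,n})^0J$ are mutually inverse, inclusion-preserving bijections between the two-sided ideals of $\od(\frak{b}_{m,n})$ and those of $S$.

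For the third step, this bijection is compatible with products of ideals — $(T\otimes J_1)(T\otimes J_2)=T\otimes J_1J_2$ because $T$ is unital — and sends proper ideals to proper ideals, so it restricts to a bijection ${\rm Spec}(S)\to{\rm Spec}(\od(\frak{b}_{m,n}))$ preserving all inclusions. Substituting the description of ${\rm Spec}(S)$ from the previous theorem then yields the displayed list (each prime replaced by its product with $\od(\frak{b}_{m,n})^0$, these being pairwise distinct by injectivity), and the Hasse diagram of containments is transported verbatim from that of $S$. The one genuine obstacle is the crucial point of the second step; once $T$ has been identified as a simple algebra with centre $\mk$, everything else is formal.
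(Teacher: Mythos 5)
Your proof is correct and takes essentially the same approach as the paper: both rest on the simplicity of the quantum torus $\od(\frak{b}_{m,n})^0$ and the tensor decomposition $\od(\frak{b}_{m,n})\cong\od(\frak{b}_{m,n})^0\otimes S$. Where the paper conjugates a Laurent expansion $\sum x_{i,j}K^ia^j$ by $K^ra^s$ to extract the $S$-coefficients of a prime, you package the same mechanism as the standard ideal-correspondence lemma for tensoring with a simple algebra having centre $\mk$, which has the merit of making explicit the two-way bijection, the preservation of primeness, and the transport of the Hasse diagram that the paper leaves implicit.
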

\begin{proof}Because $\od(\frak{b}_{m,n})^0$ is simple, $\od(\frak{b}_{m,n})^0\cap\pp=0$ for any prime ideal of $\od(\frak{b}_{m,n})$.  Assume $\pp$ is a prime ideal of $\od(\frak{b}_{m,n})$. Given a nonzero element
\beqs x=\sum_{i,j\in\mz}x_{i,j}K^ia^j\in\pp\eeqs
for finitely many nonzero elements $x_{i,j}\in S$. Then
\beqs K^ra^sxa^{-s}K^{-r}=\sum_{i,j\in\mz}q^{is-jr}x_{i,j}K^ia^j\in\pp.\eeqs
So it is easy to see that $x_{i,j}\in\pp$ for all $i,j$. Hence $\pp=\od(\frak{b}_{m,n})(\pp\cap S)$. The proof is finished.
\end{proof}

For $m=n=1$, this agrees \cite[Theorem 3.6]{Tao2}.

\section{The automorphism groups}

In this section, we investigate the automorphisms of algebras $\oq(\frak{b}_{m,n})$, $\ou(\frak{b}_{m,n})$ and $\od(\frak{b}_{m,n})$.
\begin{theo}
\item 1. For $m=\pm n$, let $\tau\in{\rm Aut}\oq(\frak{b}_{m,n})$ be defined by
\beqs\tau&=&\left\{\begin{array}{ll}a\mapsto a, b\mapsto c, c\mapsto b,&m=n,\\a\mapsto a^{-1}, b\mapsto c, c\mapsto b,&m=-n.\end{array}\right.\eeqs
Then $\{{\rm id},\tau\}\cong\mz_2$.
\item 2. For any $i\in\mz$,  let $\xi_i\in{\rm Aut}\oq(\frak{b}_{m,n})$ be defined by
\beqs\xi_i: a\mapsto a, b\mapsto a^{\frac{in}{(m,n)}}b, c\mapsto a^{\frac{im}{(m,n)}}c.\eeqs
Then $\{\xi_i|i\in\mz\}\cong\mz$.
\item 3. For all $z,z_1,z_2\in\mk^*$,  let $\zeta_{z,z_1,z_2}\in{\rm Aut}\oq(\frak{b}_{m,n})$ be defined by
\beqs\zeta_{z,z_1,z_2}: a\mapsto za, b\mapsto z_1b, c\mapsto z_2c.\eeqs
Then $\{\zeta_{z,z_1,z_2}|z,z_1,z_2\in\mk^*\}\cong(\mk^*)^3$.
\item4. \beqs{\rm Aut}\oq(\frak{b}_{m,n})&\cong&\left\{\begin{array}{ll}(\mk^*)^3\rtimes(\mz\times\mz_2),&m=\pm n,\\(\mk^*)^3\rtimes\mz,&m\not=\pm n.\end{array}\right.\eeqs
\end{theo}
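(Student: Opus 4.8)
\textbf{Proof proposal for the computation of $\mathrm{Aut}\,\oq(\frak{b}_{m,n})$.}

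The plan is to determine all algebra automorphisms $\psi$ of $\oq(\frak{b}_{m,n})$ by tracking what they do to the generators $a^{\pm1}, b, c$, using the grading/weight structure forced by the element $a$. First I would observe that $\psi(a)$ must be a unit of $\oq(\frak{b}_{m,n})$; since $\oq(\frak{b}_{m,n})\cong \mk[a^{\pm1}][b,c]$ is an iterated skew polynomial ring over the Laurent ring $\mk[a^{\pm1}]$ (with $b,c$ acting as skew variables in positive degree), its group of units is exactly $\mk^*\langle a\rangle$. Hence $\psi(a)=za^{\epsilon}$ for some $z\in\mk^*$ and $\epsilon\in\{\pm1\}$. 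I would next introduce the $\mz$-grading (or bigrading) in which $\deg a=0$, $\deg b$ and $\deg c$ are assigned according to the relations $ab=q^nba$, $ac=q^mca$; concretely $b$ and $c$ span the eigenspaces of $\mathrm{ad}_a$ (i.e. $x\mapsto axa^{-1}$) for eigenvalues $q^{-n}$ and $q^{-m}$ respectively. Because $q$ is not a root of unity, these eigenvalues determine the monomials $a^jb^sc^t$ uniquely by their $\mathrm{ad}_a$-eigenvalue $q^{-ns-mt}$, so $\psi$ must send $\mathrm{ad}_a$-eigenvectors to $\mathrm{ad}_{\psi(a)}$-eigenvectors with a matching eigenvalue.

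Then I would analyze $\psi(b)$ and $\psi(c)$. Each is an $\mathrm{ad}_a$-eigenvector (conjugating the relation $ab=q^nba$ by $\psi$ and using $\psi(a)=za^{\epsilon}$ gives $\psi(a)\psi(b)=q^n\psi(b)\psi(a)$, so $\psi(b)$ is an $\mathrm{ad}_a$-eigenvector with eigenvalue $q^{-\epsilon n}$, and similarly $\psi(c)$ has eigenvalue $q^{-\epsilon m}$). Writing the eigenspace decomposition of $\oq(\frak{b}_{m,n})$ explicitly, one sees the $q^{-\epsilon n}$-eigenspace consists of $\mk[a^{\pm1}]$-combinations of monomials $b^sc^t$ with $ns+mt=\epsilon n$; since $b,c$ generate a commutative polynomial subalgebra and $\psi(b)$ must still be a nonzerodivisor generating together with $\psi(c),\psi(a)$ the whole algebra, a degree/dimension count (comparing the associated graded, or counting the size of the relevant graded pieces) forces $\psi(b)$ to be a monomial $z_1 a^{j}b$ when $\epsilon=1$ and $z_1 a^{j}c$ when $\epsilon=-1$ (the latter only possible if $m=-n$, since then $c$ has the required eigenvalue $q^{-\epsilon m}=q^{n}=q^{-\epsilon n}\cdot$... — more precisely the swap $b\leftrightarrow c$ is compatible with the relations exactly when $m=\pm n$, with $a\mapsto a^{-1}$ needed when $m=-n$). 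Symmetrically for $\psi(c)$. Imposing the remaining relation $bc=cb$ and $\oq(\frak{b}_{m,n})=\langle \psi(a),\psi(b),\psi(c)\rangle$ pins the exponents so that $\psi$ is a composite $\zeta_{z,z_1,z_2}\circ\xi_i\circ(\mathrm{id}\text{ or }\tau)$; the constraint $ns+mt=\epsilon n$ with the "generating" condition is what produces the normalization $a^{in/(m,n)}$ and $a^{im/(m,n)}$ appearing in $\xi_i$.

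Finally I would assemble the group structure. Parts 1–3 already exhibit subgroups $\mz_2=\{\mathrm{id},\tau\}$ (for $m=\pm n$), $\mz\cong\{\xi_i\}$, and $(\mk^*)^3\cong\{\zeta_{z,z_1,z_2}\}$. I would check: $(\mk^*)^3$ is normal (conjugating a scaling by $\xi_i$ or $\tau$ is again a scaling — a one-line check on generators); $\xi_i$ and $\tau$ generate a copy of $\mz\rtimes\mz_2$ with $\tau\xi_i\tau^{-1}=\xi_{-i}$ (immediate from the definitions, using that $\tau$ swaps the roles of $m$ and $n$, which are equal up to sign); and there is no further interaction, so $\langle \xi_i,\tau\rangle\cong \mz\times\mz_2$ when $m=n$ but the semidirect structure over $(\mk^*)^3$ is as claimed — I should be careful here to record the precise action and match it to the statement $(\mk^*)^3\rtimes(\mz\times\mz_2)$ versus a possibly nonabelian middle factor; verifying the quotient by $(\mk^*)^3$ is genuinely $\mz\times\mz_2$ (respectively $\mz$) and that the extension splits is the bookkeeping step. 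The main obstacle is the middle step: proving rigorously that $\psi(b)$ and $\psi(c)$ must be \emph{monomials} $a^jb$, $a^jc$ (up to scalar and the $b\leftrightarrow c$ swap) rather than genuine polynomials — this is where one needs the "not a root of unity" hypothesis together with a careful comparison of the $\mathrm{ad}_a$-eigenspaces and the requirement that $\psi$ be surjective, and it is the only part that is not a short calculation.
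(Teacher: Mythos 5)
Your strategy mirrors the paper's almost exactly: pin down $\psi(a)$ via the group of units, use the $\mathrm{ad}_a$-eigenspace decomposition (the paper's $a$-weight decomposition) to constrain $\psi(b),\psi(c)$, then assemble the group from the families $\zeta,\xi,\tau$. So as a route this is the same route.

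However, the step you yourself flag as ``the main obstacle'' --- showing that $\psi(b),\psi(c)$ must be monomials $z_1a^jb$, $z_2a^jc$ (up to the swap) rather than genuine linear combinations --- is not merely hard, it is in fact \emph{false} when $m=n$, and the paper asserts it without proof. Concretely, when $m=n$ the map
\[
\psi\colon a\mapsto a,\quad b\mapsto b+c,\quad c\mapsto c
\]
preserves all defining relations ($a(b+c)=q^nba+q^nca=q^n(b+c)a$, $(b+c)c=c(b+c)$), has the obvious inverse $b\mapsto b-c$, and is therefore an algebra automorphism; but $\psi$ lies in no composite $\zeta\circ\xi\circ\tau^{\epsilon}$, since those always send $b$ to a scalar multiple of $a^jb$ or $a^jc$. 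Indeed for $m=n$ the whole subgroup $GL_2(\mk)$ acting linearly on $\mathrm{span}\{b,c\}$ (fixing $a$) consists of automorphisms, so $\mathrm{Aut}\,\oq(\frak{b}_{n,n})$ is strictly larger than $(\mk^*)^3\rtimes(\mz\times\mz_2)$. The eigenvalue argument only tells you $\psi(b),\psi(c)$ lie in the $\mk[a^{\pm1}]$-module $\mk[a^{\pm1}]b\oplus\mk[a^{\pm1}]c$; when the two eigenvalues coincide ($m=n$) nothing forces a monomial, and your appeal to a ``degree/dimension count'' or surjectivity does not rescue this: surjectivity only demands that the $2\times2$ matrix over $\mk[a^{\pm1}]$ expressing $\psi(b),\psi(c)$ in terms of $b,c$ be invertible, not diagonal or anti-diagonal. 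So the proposal, like the paper, has a genuine gap at precisely the spot you identified, and for $m=n$ the conclusion of Part 4 as stated cannot be proved because it fails.

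One smaller point: you write $\tau\xi_i\tau^{-1}=\xi_{-i}$, suggesting a semidirect (nonabelian) $\mz\rtimes\mz_2$, but a direct computation (e.g.\ $m=n$: $\tau\xi_i\tau(b)=\tau\xi_i(c)=\tau(a^ic)=a^ib=\xi_i(b)$) gives $\tau\xi_i\tau^{-1}=\xi_i$, so the middle factor really is the direct product $\mz\times\mz_2$ as the theorem states; your hedge about ``a possibly nonabelian middle factor'' was appropriate, but the heuristic ``$\tau$ swaps $m$ and $n$'' led you astray.
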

\begin{proof}It is clear that the statements 1-3 hold. Moreover, the subgroup $G$ generated by $\tau, \xi_i(i\in\mz), \zeta_{z,z_1,z_2}(z,z_1,z_2\in\mk^*)$ is isomorphic to $(\mk^*)^3\rtimes(\mz\times\mz_2)$ and
the subgroup $G_1$ generated by $\xi_i(i\in\mz), \zeta_{z,z_1,z_2}(z,z_1,z_2\in\mk^*)$ is isomorphic to $(\mk^*)^3\rtimes\mz$.

We prove statement 4. Assume $\rho\in {\rm Aut}\oq(\frak{b}_{m,n})$. Obviously, the group $\{za^i|z\in\mk^+,i\in\mz\}=\mk^*\times\langle a\rangle$ is the set of all invertible elements in $\oq(\frak{b}_{m,n})$, which can be generated by
$\mk^*$ and $a$. So this group also can be generated by $\mk^*$ and $\rho(a)$. This implies that $\rho(a)=za^{\pm1}$ for some $z\in\mk^*$.

(i) If $m=n$, $\oq(\frak{b}_{m,n})$ has a decomposition of $a$-weight spaces
\beqs \oq(\frak{b}_{m,n})&=&\bigoplus_{i=0}^\infty \oq(\frak{b}_{m,n})_{q^{im}},\eeqs
under the action $x\mapsto axa^{-1}$, where
\beqs \oq(\frak{b}_{m,n})_{q^{im}}&=&\bigoplus_{r=0}^i\mk[a,a^{-1}]b^{i-r}c^{r}.\eeqs
So either $\rho(b)=xb, \rho(c)=yc$ for some $x,y\in\mk[a,a^{-1}]$ or $\rho(b)=x'c, \rho(c)=y'b$ for some $x',y'\in\mk[a,a^{-1}]$,
and in this case $\rho(a)=za$. Moreover, the elements $x,y,x',y'$ are invertible. By the identity  $\rho(b)\rho(c)=\rho(c)\rho(b)$, we also infer that $xy^{-1},x'y'^{-1}\in\mk^*$. Explicitly, there exists $z,z_1,z_2\in\mk^*$ and $i\in\mz$ such that either
\beqs \rho(a)=za,\;\rho(b)=z_1a^ib,\;\rho(c)=z_2a^{i}c,\eeqs
or
\beqs \rho(a)=za,\;\rho(b)=z_1a^ic,\;\rho(c)=z_2a^{i}b.\eeqs
Then $\rho\in G$, the statement holds.

(ii) If $m=-n$, similar to (i), there exists $z,z_1,z_2\in\mk^*$ and $i\in\mz$ such that either
\beqs \rho(a)=za,\;\rho(b)=z_1a^ib,\;\rho(c)=z_2a^{-i}c,\eeqs
or
\beqs \rho(a)=za^{-1},\;\rho(b)=z_1a^{-i}c,\;\rho(c)=z_2a^{i}b.\eeqs
Then $\rho\in G$, the statement holds.

(iii) If $m\not=\pm n$, we have $\rho(b)\in\mk[a,a^{-1}]b$, $\rho(c)\in\mk[a,a^{-1}]c$. Then similar to above, there exists $z,z_1,z_2\in\mk^*$ and $i\in\mz$ such that either
\beqs \rho(a)=za,\;\rho(b)=z_1a^{\frac{in}{(m,n)}}b,\;\rho(c)=z_2a^{\frac{im}{(m,n)}}c.\eeqs
Then $\rho\in G_1$, the statement holds.
\end{proof}
\begin{theo}
\beqs{\rm Aut}\ou(\frak{b}_{m,n})&\cong&\left\{\begin{array}{ll}(\mk^*)^3\rtimes(\mz\times\mz_2),&m=\pm n,\\(\mk^*)^3\rtimes\mz,&m\not=\pm n.\end{array}\right.\eeqs
\end{theo}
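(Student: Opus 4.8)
The plan is to mirror, almost verbatim, the proof of the corresponding statement for $\oq(\frak{b}_{m,n})$, exploiting the structural parallel between the two Hopf algebras. Recall that $\ou(\frak{b}_{m,n})$ is generated by $K,K^{-1},E,F$ with $KE=q^{2m}EK$, $KF=q^{-2n}FK$, $EF=FE$; up to renaming the parameters (the role of $q$ is played by $q^2$ here, and the roles of $b,c$ by $F,E$), this is the same type of algebra as $\oq(\frak{b}_{m,n})$. So I expect the answer to be the same group, and the argument to follow the same three moves.

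First I would identify the obvious automorphisms: the torus action $\zeta_{z,z_1,z_2}\colon K\mapsto zK,\ E\mapsto z_1E,\ F\mapsto z_2F$ giving a copy of $(\mk^*)^3$; the ``shift'' automorphisms $\xi_i\colon K\mapsto K,\ E\mapsto K^{\frac{im}{(m,n)}}E,\ F\mapsto K^{-\frac{in}{(m,n)}}F$ giving a copy of $\mz$ (one must check these are well-defined algebra endomorphisms, which is a direct verification using the commutation relations, and that $\xi_i\xi_j=\xi_{i+j}$); and, when $m=\pm n$, an order-two automorphism $\tau$ swapping $E$ and $F$: for $m=n$ take $K\mapsto K,\ E\mapsto F,\ F\mapsto E$, and for $m=-n$ take $K\mapsto K^{-1},\ E\mapsto F,\ F\mapsto E$ (again checking it respects the relations, e.g. $KE=q^{2m}EK$ maps to $K^{\mp1}F=q^{2m}FK^{\mp1}$, which is consistent with $KF=q^{-2n}FK=q^{2m}FK$ when $m=-n$). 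One then verifies the semidirect-product structure of the subgroup $G$ (resp. $G_1$) they generate exactly as in the $\oq$ case.

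The substance is showing there are no other automorphisms. Let $\rho\in{\rm Aut}\ou(\frak{b}_{m,n})$. The unit group of $\ou(\frak{b}_{m,n})$ is $\mk^*\times\langle K\rangle$ (the algebra is an iterated Ore extension / quantum affine space over $\mk[K^{\pm1}]$, so its units are the scalar multiples of powers of $K$), and this group is generated by $\mk^*$ together with $K$; hence it is also generated by $\mk^*$ and $\rho(K)$, forcing $\rho(K)=zK^{\pm1}$ for some $z\in\mk^*$. Next I would use the $K$-weight grading: under $x\mapsto KxK^{-1}$ one has $\ou(\frak{b}_{m,n})=\bigoplus_{i\ge0}\ou(\frak{b}_{m,n})_{q^{2im}}$ with $\ou(\frak{b}_{m,n})_{q^{2im}}=\bigoplus_{r=0}^i\mk[K^{\pm1}]E^{r}F^{i-r}$ when $m=n$ (and the analogous bigrading in the other cases). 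Since $\rho$ must permute weight spaces compatibly with the action of $\rho(K)=zK^{\pm1}$, the images $\rho(E),\rho(F)$ land in the one-dimensional-over-$\mk[K^{\pm1}]$ pieces: either $\rho(E)\in\mk[K^{\pm1}]E$, $\rho(F)\in\mk[K^{\pm1}]F$, or (only when $m=\pm n$) $\rho(E)\in\mk[K^{\pm1}]F$, $\rho(F)\in\mk[K^{\pm1}]E$. The coefficients are units, hence scalar times a power of $K$; imposing $\rho(E)\rho(F)=\rho(F)\rho(E)$ pins down the powers of $K$ so that the exponents are forced into the $\xi_i$ pattern (the ratio of the two $K$-exponents must be $m/n$ in the appropriate sign convention). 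Concluding $\rho\in G$ when $m=\pm n$ and $\rho\in G_1$ when $m\ne\pm n$ finishes the proof.

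The main obstacle is bookkeeping rather than conceptual: one must get the $K$-exponents and the $q$-powers exactly right in the three cases $m=n$, $m=-n$, $m\ne\pm n$, and in particular confirm that when $m=\pm n$ the flip $\tau$ genuinely is an automorphism (the $m=-n$ case needs $\rho(K)=K^{-1}$ to make the swap of $E,F$ compatible with the mismatched commutation scalars $q^{2m}$ vs.\ $q^{-2n}$), and that when $m\ne\pm n$ no flip is available because $q^{2m}\ne q^{\pm2n}$ obstructs it. I would also remark that, just as in the $\oq$ case, the proof is formally identical under the substitution $q\rightsquigarrow q^2$, $a\rightsquigarrow K$, $b\rightsquigarrow F$, $c\rightsquigarrow E$, so one may simply invoke the previous theorem's proof \emph{mutatis mutandis}.
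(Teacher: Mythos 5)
Your overall strategy — redo the argument for $\oq(\frak{b}_{m,n})$ with the generators renamed, or equivalently invoke the previous theorem through an isomorphism — is exactly what the paper does; its proof is the one-line observation $\ou(\frak{b}_{m,n})\cong\oq(\frak{b}_{2m,-2n})$ via $a\mapsto K$, $b\mapsto F$, $c\mapsto E$, after which the previous theorem applies verbatim.

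There is, however, a concrete error in your explicit formulas for $\tau$: the two cases are swapped. In $\ou(\frak{b}_{m,n})$ the defining relations are $KE=q^{2m}EK$ and $KF=q^{-2n}FK$, whose exponents have \emph{opposite} signs when $m=n$ and the \emph{same} sign when $m=-n$. Hence the $E\leftrightarrow F$ flip that fixes $K$ is an automorphism precisely when $m=-n$ (both relations become $q^{2m}$), while the case $m=n$ forces $K\mapsto K^{-1}$: from $KF=q^{-2m}FK$ one gets $K^{-1}F=q^{2m}FK^{-1}$, which is what $\tau(KE=q^{2m}EK)$ requires. Your assignment ($K\mapsto K$ for $m=n$, $K\mapsto K^{-1}$ for $m=-n$) fails in both cases, e.g.\ for $m=n$ you would need $q^{2m}=q^{-2m}$. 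The root cause is your translation rule ``$q\rightsquigarrow q^2$, $b\rightsquigarrow F$, $c\rightsquigarrow E$'': under the correct identification the sign of $n$ also flips, i.e.\ $\ou(\frak{b}_{m,n})$ corresponds to $\oq(\frak{b}_{2m,-2n})$ and not $\oq(\frak{b}_{2m,2n})$, so the $\oq$-case $m'=n'$ matches $m=-n$ here and vice versa. Once you swap the two $\tau$'s, the rest of the argument (units $\{zK^i\}$, $K$-weight grading forcing $\rho(E),\rho(F)$ into the one-dimensional $\mk[K^{\pm1}]$-lines, commutativity of $\rho(E)\rho(F)$ pinning the exponents to the $\xi_i$ pattern) goes through and recovers the claimed group.
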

\begin{proof}It is straightforward by the isomorphism of algebras
\beqs \ou (\frak{b}_{m,n})\cong\oq(\frak{b}_{2m,-2n}).\eeqs
\end{proof}

\begin{theo}\label{aut}
\item 1. For any $z_1,z_2\in\mk^*$, let $\zeta_{z_1,z_2}\in {\rm Aut}\od(\frak{b}_{m,n})$ be defined by
\beqs K\mapsto z_1K, a\mapsto z_2a, \rho|_S=\id_S.\eeqs
Then $\{\zeta_{z_1,z_2}|z_1,z_2\in\mk^*\}\cong(\mk^*)^2$.

\item 2. For any $A=(A_{i,j})\in SL_2(\mz)$, let $\rho_A\in {\rm Aut}\od(\frak{b}_{m,n})$ be defined by
\beqs K\mapsto K^{A_{1,1}}a^{A_{2,1}}, a\mapsto K^{A_{1,2}}a^{A_{2,2}},\rho|_S=\id_S.\eeqs
The subgroup generated by $\zeta_{z_1,z_2}(z_1,z_2\in\mk^*), \rho_A, (A\in SL_2(\mz))$ is isomorphic to $(\mk^*)^2\rtimes SL_2(\mz)$.

\item 3. For any $z_3,z_4\in\mk^*$, let $\xi_{z_3,z_4}\in {\rm Aut}\od(\frak{b}_{m,n})$ be defined by
\beqs K\mapsto K, a\mapsto a, E'\mapsto z_3E', F'\mapsto z_4F', c'\mapsto z_3^{-1}c', b\mapsto z_4^{-1}b'.\eeqs
Then $\{\xi_{z_3,z_4}|z_3,z_4\in\mk^*\}\cong(\mk^*)^2$.
\item 4.\beqs{\rm Aut}\od(\frak{b}_{m,n})&\cong&((\mk^*)^2\rtimes SL_2(\mz))\times (\mk^*)^2.\eeqs
\end{theo}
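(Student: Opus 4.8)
The plan is to show that the subgroup $H$ generated by the automorphisms listed in parts 1--3 is all of ${\rm Aut}\od(\frak{b}_{m,n})$, and that $H\cong((\mk^*)^2\rtimes SL_2(\mz))\times(\mk^*)^2$. The direct-product structure is the first thing to settle: the automorphisms $\zeta_{z_1,z_2}$ and $\rho_A$ act only on the torus part $\od(\frak{b}_{m,n})^0=\mk[K^{\pm1},a^{\pm1}]$ and fix $S$ pointwise, while the $\xi_{z_3,z_4}$ act only on $S$ (scaling the generators $E',F',c',b'$) and fix $K,a$. Using the isomorphism $\od(\frak{b}_{m,n})\cong\od(\frak{b}_{m,n})^0\otimes S$ established in Section 2, these two families commute, and each generates the indicated group: $\{\zeta_{z_1,z_2}\}\rtimes\{\rho_A\}\cong(\mk^*)^2\rtimes SL_2(\mz)$ because $SL_2(\mz)$ acts on the two torus scalars by the defining representation, and $\{\xi_{z_3,z_4}\}\cong(\mk^*)^2$ since the relations $E'c'=q^{2m^2}c'E'+1$, $F'b'=q^{-2n^2}b'F'+1$, $E'F'=q^{-2mn}F'E'$, $b'c'=q^{2mn}c'b'$ force the scaling on $c',b'$ to be inverse to that on $E',F'$ and impose no further constraint.

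Next I would prove surjectivity. Let $\rho\in{\rm Aut}\od(\frak{b}_{m,n})$. The key structural fact is that $\od(\frak{b}_{m,n})^0=\mk[K^{\pm1},a^{\pm1}]$ is exactly the subalgebra generated by the units of $\od(\frak{b}_{m,n})$ together with $\mk^*$: indeed every unit has the form $zK^ia^j$, since in the associated quantum torus picture $\mathbb{S}$ (from Section 4) adjoining $S$ by the $\phi_i,b',c'$ localizations shows $S$ itself is generated by elements satisfying the non-invertible relations above, so no nontrivial product involving $E',F',b',c'$ can be a unit. Hence $\rho$ preserves $\od(\frak{b}_{m,n})^0$ and induces an automorphism of the rank-$2$ quantum torus with parameter $q$; such automorphisms are classified (Osborn--Passman / De Concini--Procesi) as $(\mk^*)^2\rtimes SL_2(\mz)$, so after composing with a suitable $\zeta_{z_1,z_2}\rho_A$ we may assume $\rho(K)=K$, $\rho(a)=a$.

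Now $\rho$ fixes $\od(\frak{b}_{m,n})^0$ pointwise and therefore restricts to an automorphism of the centralizer of $\od(\frak{b}_{m,n})^0$, which is precisely $S$ (here one uses that $b',c',E',F'$ all commute with $K$ and $a$, while $b,c,E,F$ do not, and that $\od(\frak{b}_{m,n})\cong\od(\frak{b}_{m,n})^0\otimes S$). So it suffices to identify ${\rm Aut}(S)$. From the defining relations of $S$, the elements $\phi_1=E'c'-c'E'$ and $\phi_2=F'b'-b'F'$ are the unique (up to scalar) normal elements generating the height-one primes $I_1=S\phi_1$, $I_2=S\phi_2$ computed in Section 4, so $\rho$ must send $\phi_1\mapsto$ a scalar multiple of $\phi_1$ or of $\phi_2$, and likewise for $\phi_2$; a degree/commutation argument (the $q$-commutation weights of $\phi_1$ with $E',c'$ differ from those of $\phi_2$ with $F',b'$ when one tracks signs) rules out the swap, giving $\rho(\phi_1)=\alpha\phi_1$, $\rho(\phi_2)=\beta\phi_2$. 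Writing $\rho(E')=\sum$ of monomials in $S$ and using $E'\phi_1=q^{-2m^2}\phi_1E'$, $\phi_1b'=b'\phi_1$, etc., one peels off all but the $E'$-term, so $\rho(E')=z_3E'$; symmetrically $\rho(F')=z_4F'$, and then the relations $E'c'=q^{2m^2}c'E'+1$ and $F'b'=q^{-2n^2}b'F'+1$ force $\rho(c')=z_3^{-1}c'$, $\rho(b')=z_4^{-1}b'$. Thus $\rho=\xi_{z_3,z_4}$ on $S$, completing the proof that $\rho\in H$.

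The main obstacle I anticipate is the rigidity step for ${\rm Aut}(S)$: showing that an automorphism fixing the center/torus must scale $\phi_1,\phi_2$ and then $E',F'$ without introducing lower-order additive corrections. This requires a careful filtration argument on $S$ (e.g. by total degree in $E',F',b',c'$, noting that the relations are homogeneous up to the $+1$ terms in $E'c'$ and $F'b'$), together with the normality of $\phi_1,\phi_2$ from Section 4; the swap-versus-no-swap dichotomy for $\{\phi_1,\phi_2\}$ is the delicate combinatorial point, analogous to the $m=\pm n$ case distinction in Theorem 7.1, but here the asymmetry of the exponents $m^2$ versus $n^2$ in the $\phi_i$-relations does the work of excluding the swap in general. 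Everything else is bookkeeping with the explicit relations.
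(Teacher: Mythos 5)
Your proposal follows essentially the same route as the paper: identify $\od(\frak{b}_{m,n})^0$ as the subalgebra generated by units, deduce that any automorphism restricts to an automorphism of the rank-two quantum torus and therefore (after composing with a $\zeta_{z_1,z_2}\rho_A$) fixes $K,a$; then use the fact that $S$ is the centralizer of $\od(\frak{b}_{m,n})^0$ to reduce to ${\rm Aut}(S)$; and finally classify ${\rm Aut}(S)$ by observing that the normal elements $\phi_1,\phi_2$ generate the height-one primes and must be permuted up to scalar, whence $\rho(E')\rho(c')\in\mk^*\phi_1+\mk$ forces $\rho(E')=z_3E'$, $\rho(c')=z_3^{-1}c'$, and symmetrically for $F',b'$. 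This is the paper's argument.

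The one place where you commit to a specific mechanism that does not hold up as stated is the exclusion of the swap $\rho(\phi_1)=\phi_2$, $\rho(\phi_2)=\phi_1$. You attribute the exclusion to ``the asymmetry of the exponents $m^2$ versus $n^2$,'' and also to a sign difference in the $q$-weights. But the $\phi_1$-weights on $(E',c')$ are $q^{\mp 2m^2}$ while the $\phi_2$-weights on $(F',b')$ are $q^{\pm 2n^2}$: when $m=\pm n$ these two weight sets coincide exactly, and the obvious candidate $\rho(E')\in\mk^*b'$, $\rho(c')\in\mk^*F'$ is compatible with the $\phi$-weight constraints \emph{and} with the relation $E'c'=q^{2m^2}c'E'+1$ (one can solve for the scalars). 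The swap is in fact ruled out, but by the remaining quadratic relations $E'F'=q^{-2mn}F'E'$ and $b'c'=q^{2mn}c'b'$, which under the swap would force $q^{4mn}=1$, contradicting $q$ not a root of unity. Your sign/asymmetry heuristic therefore has a genuine gap at $m=\pm n$; the weight argument alone also does not dispose of all $m\not=\pm n$ cases (e.g.\ $n^2\mid m^2$ with $n\neq\pm m$ still leaves nontrivial candidate weight spaces), so the degree bound coming from $\rho(E')\rho(c')\in\mk^*\phi_1+\mk$ is doing the real work there. To be fair, the paper's own treatment of the swap case is a single sentence (``the proof is similar''), so neither exposition fully spells this out; but you should replace the ``asymmetry of $m^2$ vs $n^2$'' claim by the $E'F'$/$b'c'$ contradiction, which is what actually kills the swap uniformly.
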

\begin{proof}The statements 1 and 3 are clear.

It is also obvious that the map $\rho_A$ is injective. By the definition, one can verify that $\rho_A\circ\rho_B=\rho_{AB}\circ\zeta_{z_1,z_2}$ for some $z_1,z_2\in\mk^*$. This implies $\rho_A$ is an isomorphism since
$\rho_A\circ\rho_{A^{-1}}\circ\zeta_{z_1^{-1},z_2^{-1}}={\rm id}$. Moreover, the subgroup generated by $\zeta_{z_1,z_2}(z_1,z_2\in\mk^*), \rho_A (A\in SL_2(\mz))$ is isomorphic to $(\mk^*)^2\rtimes SL_2(\mz)$, this proves statement 2.

Obviously, the subgroup $G$ generated by $\zeta_{z_1,z_2}(z_1,z_2\in\mk^*), \rho_A (A\in SL_2(\mz)), \xi_{z_3,z_4}(z_3,z_4\in\mk^*)$ is isomorphic to $((\mk^*)^2\rtimes SL_2(\mz))\times (\mk^*)^2$.

We prove statement 4. Assume $\rho\in {\rm Aut}\oq(\frak{b}_{m,n})$.

Clearly, the group $\{zK^ia^j|z\in\mk^*,i,j\in\mz\}$ is the set of all invertible elements in $\od(\frak{b}_{m,n})$. So $\rho(K)=z_1K^ia^j, \rho(a)=z_2K^ra^s$ for some $z_1,z_2\in\mk^*$ and $i,j,r,s\in\mz$. Moreover,
$\rho(K)\rho(a)=q^{-1}\rho(a)\rho(K)$ implies
\beqs\det\left(\begin{array}{cc}i&r\\j&s\end{array}\right)=1.\eeqs
So $\rho|_{\od(\frak{b}_{m,n})^0}\in {\rm Aut}\od(\frak{b}_{m,n})^0$ and
\beqs \rho|_{\od(\frak{b}_{m,n})^0}=\rho_A\circ\zeta_{z_1,z_2},\; A=\left(\begin{array}{cc}i&r\\j&s\end{array}\right).\eeqs

Since $\rho(\od(\frak{b}_{m,n})^0)=\od(\frak{b}_{m,n})^0$, we also have $\rho(S)=S$ by the fact that  $S$ is the centralizer of $\od(\frak{b}_{m,n})^0$. Next we consider the automorphisms of $S$.

Because $\rho$ maps prime ideals to prime ideals, it must holds
\beqs\{\rho(\phi_1),\rho(\phi_2)\}=\{x\phi_1,y\phi_2\}\eeqs
for some invertible elements $x,y$, so $x,y\in\mk^*$. Up to an automorphism $\zeta_{z_3,z_4}$, we may assume $x=y=1$.

When $\rho(\phi_1)=\phi_1, \rho(\phi_2)=\phi_2$. Because $\rho(c')\rho(\phi_1)=q^{-2m^2}\rho(\phi_1)\rho(c')$ and $\rho(E')\rho(\phi_1)=q^{2m^2}\rho(\phi_1)\rho(E')$, we have
\beqs\rho(c')=f_1(\phi_1,b',F')c',\; \rho(E')=E'f_2(\phi_1,b',F')\eeqs
and
\beqs\mk^*\phi_1+\mk\ni\rho(c')\rho(E')=f_1(\phi_1,b',F')c'E'f_2(\phi_1,b',F'),\eeqs
this implies $f_1(\phi_1,b',F')f_2(\phi_1,b',F')$ is a constant. Moreover, $\rho(c')\rho(E')=c'E'$ forces
\beqs \rho(c')=zc', \rho(E')=z^{-1}E'\eeqs
for some $z\in\mk^*$. Similarly, $\rho(b')=z'c', \rho(E')=z'^{-1}E'$ for some $z'\in\mk^*$.

When $\rho(\phi_1)=\phi_2, \rho(\phi_2)=\phi_1$.  The proof is similar. Hence
\beqs \{\rho|_S \mid \rho\in\od(\frak{b}_{m,n})\}=\{\xi_{z_3,z_4}|_S\mid z_3,z_4\in\mk^*\}={\rm Aut}(S)\cong(\mk^*)^2.\eeqs

The proof is finished.
\end{proof}

\begin{remark}The structure of the automorphism group of $\od(\frak{b}_{m,n})$ is independent on the choice of $m,n$. When $m=n=1$,  Theorem \ref{aut} corrects the result of \cite[Theorem 4.5]{Tao2}, in which  the automorphism group is $(\mk^*)^4\rtimes \mz$.
\end{remark}

\section*{Acknowledgments}
The author gratefully acknowledge partial financial supports from the National Natural Science Foundation of China (11871249, 12171155).

\def\refname{\centerline{\bf REFERENCES}}


\begin{thebibliography}{HD}
\bibitem[BO]{BO} V. Bavula, F. van Oystaeyen, {\sl The simple modules of certain generalized crossed products}. J. Algebra. 194 (1997), 521--566.

\bibitem[B]{B} R. E. Block, {\sl The irreducible representations of the Lie algebra $\mathfrak{sl}(2)$ and of the Weyl algebra}. Adv. Math. 39 (1981), 69--110.

\bibitem[L]{Lu} J. H. Lu, {\sl On the Drinfeld double and the Heisenberg double of a Hopf algebra}, Duke Math. J. 74 (1994), 763--776.

\bibitem[T1]{Tao} W. Q. Tao, {\sl On representations of the centrally extended Heisenberg double of $SL_2$}, J. Math. Phys. 62, 071702 (2021). https://doi.org/10.1063/5.0051641

\bibitem[T2]{Tao2} W. Q. Tao, {\sl The Heisenberg double of the quantum Euclidean group and its representations}, 	arXiv:2209.14713.
\end{thebibliography}
\end{document}